\newcommand{\tluste}[1]{\mbox{\mathversion{bold}$ #1 $}}
\newcommand{\vr}[1]{{{#1}}}
\newcommand{\mace}[1]{{{#1}}}
\newcommand{\omace}[1]{\mbox{$\overline{\mace{#1}}$}} 
\newcommand{\umace}[1]{\mbox{$\underline{\mace{#1}}$}} 
\newcommand{\imace}[1]{\mbox{$\tluste{#1}$}}
\def\Mid#1{#1_c}
\def\Rad#1{#1_\Delta}
\def\MID#1{\mathop{\mathrm{mid}}#1}	
\def\RAD#1{\mathop{\mathrm{rad}}#1}	
\newcommand{\ovr}[1]{\mbox{$\overline{\vr{#1}}$}} 
\newcommand{\uvr}[1]{\mbox{$\underline{\vr{#1}}$}}
\newcommand{\ivr}[1]{\mbox{$\tluste{#1}$}} 
\newcommand{\R}[0]{{\mathbb{R}}}
\newcommand{\IR}[0]{{\mathbb{IR}}}
\newcommand{\mmid}[0]{;\,}		
\def\clqq{``}
\def\crqq{''}
\def\quo#1{\clqq{}#1\crqq{}}  
\DeclareMathOperator{\sgn}{sgn}	
\DeclareMathOperator{\diag}{diag}	
\def\nref#1{$(\ref{#1})$}
\newtheorem{theorem}{Theorem}
\newtheorem{proposition}{Proposition}
\newtheorem{lemma}{Lemma}
\newtheorem{corollary}{Corollary}
\theoremstyle{definition}
\newtheorem{example}{Example}
\begin{document}

\title{AE solutions and AE solvability to general interval linear systems}

\author{
  Milan Hlad\'{i}k\footnote{
Charles University, Faculty  of  Mathematics  and  Physics,
Department of Applied Mathematics, 
Malostransk\'e n\'am.~25, 11800, Prague, Czech Republic, 
e-mail: \texttt{milan.hladik@matfyz.cz}
}
}

\date{\today}
\maketitle

\begin{abstract}
We consider linear systems of equations and inequalities with coefficients varying inside given intervals. We define their solutions (so called AE solutions) and solvability (so called AE solvability) by using forall-exists quantification of interval parameters. We present an explicit description of the AE solutions, and discuss complexity issues as well. For AE solvability, we propose a sufficient condition only, but for a specific sub-class of problems, a complete characterization is developed. Moreover, we investigate inequality systems for which AE solvability is equivalent to existence of an AE solution.
\end{abstract}


\section{Introduction}

Interval linear systems appear in many situations. The basic problem of solving interval linear equations \cite{Fie2006,MooKea2009,Neu1990,Rum2010,Sha2002} is important in solving and verifying real-valued linear and nonlinear systems, and in solving engineering problems with uncertain data. Interval linear inequalities emerge in global optimization when linearization techniques are used \cite{HlaHor2014a} and in mathematical programming when dealing with uncertainty \cite{AllNeh2013,Hla2012a,Hla2014a,Hla2014:a}.

Traditionally, a solution of an interval system is defined as a solution for some realization of intervals. In order to model robustness in interval equations solving, generalized concepts of solutions using quantifications appeared. The commonly used one is an AE solution \cite{Gol2005,GolChab2006,Pop2012,PopHla2013,Sha2002}, characterized by $\forall\exists$-quantification of interval parameters. 

To the best of our knowledge, the concept of AE solutions has not been utilized for interval inequalities yet. There are, however, many special cases studied. In interval linear programming, for example, the concept of quantified solutions was recently introduced in \cite{LiLuo2013,LuoLi2013a,LuoLi2014a}.

In this paper, we study AE solutions for general interval linear systems of equations, inequalities or both. Its direct applicability is in interval linear programming to characterize various models of robust solutions.

\paragraph*{Notation.} 

The sign of a real $r$ is defined as $\sgn(r)=1$ if $r\geq0$ and $\sgn(r)=-1$ otherwise; for vectors the sign is meant entrywise. Next, $\diag(s)$ stands for the diagonal matrix with entries given by $s$.

An interval matrix is defined as
$$
\imace{A}:=\{A\in\R^{m\times n}\mmid \umace{A}\leq A\leq \omace{A}\},
$$
where $ \umace{A}$ and  $\omace{A}$, $\umace{A}\leq\omace{A}$, are given matrices, and the inequality between matrices is understood componentwise. The midpoint and radius matrices are defined as
$$
\Mid{A}:=\frac{1}{2}(\umace{A}+\omace{A}),\quad
\Rad{A}:=\frac{1}{2}(\omace{A}-\umace{A}).
$$
Alternatively, for complex expressions, we write the midpoint and radius matrices as the functions
$$
\MID{(\imace{A})}:=\frac{1}{2}(\umace{A}+\omace{A}),\quad
\RAD{(\imace{A})}:=\frac{1}{2}(\omace{A}-\umace{A}).
$$
The set of all $m\times n$ interval matrices is denoted by $\IR^{m\times n}$.
Naturally, intervals and interval vectors are considered as special cases of interval matrices.
For interval arithmetic see, e.g., \cite{AleHer1983,MooKea2009,Neu1990}.

Given $\imace{A}\in\IR^{m\times n}$ and $\ivr{b}\in\IR^{m}$, the corresponding interval linear system of equations is the family of systems
\begin{align}\label{ilsEq}
Ax=b,\quad A\in\imace{A},\ b\in\ivr{b}.
\end{align}

\paragraph*{Solution concepts.}
There are different definitions of a solution of the interval system \nref{ilsEq}; cf.\ \cite{Fie2006}. We say that $x\in\R^n$ is
\begin{itemize}
\item
a (weak) solution if $\exists A\in\imace{A}$, $\exists b\in\ivr{b}$: $Ax=b$,
\item
a strong solution if $\forall A\in\imace{A}$, $\forall b\in\ivr{b}$: $Ax=b$,
\item
a tolerable solution if $\forall A\in\imace{A}$, $\exists b\in\ivr{b}$: $Ax=b$,
\item
a controllable solution if $\forall b\in\ivr{b}$, $\exists A\in\imace{A}$: $Ax=b$.
\end{itemize}
Similarly, we define analogous solutions for other types of linear systems (inequalities, or mixed equations and inequalities).

Weak solutions are the most commonly used ones  \cite{MooKea2009,Neu1990}. Strong solutions are more appropriate in the context of interval inequalities  \cite{Fie2006,Hla2013b}. Tolerable solutions were studied, e.g., by \cite{PopHla2013,Roh1986,Sha2002,Sha2004}, and controllable solutions in  \cite{PopHla2013,Sha1992,Sha2002}.

The above solution types were generalized to the so called AE solutions \cite{Gol2005,GolChab2006,PopHla2013,Sha2002}. Each interval is associated either with the universal, or with the existential quantifier. Thus, we can split the interval matrix as $\imace{A}=\imace{A}^{\forall}+\imace{A}^{\exists}$, where $\imace{A}^{\forall}$ is the interval matrix comprising universally quantified coefficients, and  $\imace{A}^{\exists}$ concerns existentially quantified coefficients.
Similarly, we decompose the right-hand side vector $\ivr{b}=\ivr{b}^{\forall}+\ivr{b}^{\exists}$. Now, $x\in\R^n$ is an AE solution if 
\begin{align*}
\forall A^{\forall}\in\imace{A}^{\forall},
\forall b^{\forall}\in\ivr{b}^{\forall},
\exists A^{\exists}\in\imace{A}^{\exists},
\exists b^{\exists}\in\ivr{b}^{\exists}:\,  
(A^{\forall}+A^{\exists})x=b^{\forall}+b^{\exists}.
\end{align*}
In the same manner we define AE solutions for interval inequalities and other interval linear systems.

The following characterization of AE solutions is from \cite{Sha2002}.

\begin{theorem}\label{thmAErohn}
A vector $x\in\R^n$ is an AE-solution to interval equations $\imace{A}x=\ivr{b}$ if and only if
\begin{align*}
|\Mid{A}x-\Mid{b}|\leq
\big(\Rad{\mace{A}^{\exists}}-\Rad{{A}^{\forall}}\big)|x|
+\Rad{\vr{b}^{\exists}}-\Rad{{b}^{\forall}}.
\end{align*}
\end{theorem}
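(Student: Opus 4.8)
The plan is to unfold the $\forall\exists$ definition and reduce it, row by row, to a statement about a single linear expression lying in an interval. Fix a row index $i$. Writing $A^{\forall}=\Mid{A}^{\forall}+D^{\forall}$ and $A^{\exists}=\Mid{A}^{\exists}+D^{\exists}$ with $|D^{\forall}_{i\cdot}|\le\Rad{A}^{\forall}_{i\cdot}$ and $|D^{\exists}_{i\cdot}|\le\Rad{A}^{\exists}_{i\cdot}$, and similarly splitting $b_i$, the $i$th equation $(A^{\forall}+A^{\exists})_{i\cdot}x=b^{\forall}_i+b^{\exists}_i$ becomes
\begin{align*}
\Mid{A}_{i\cdot}x-\Mid{b}_i = -D^{\forall}_{i\cdot}x + b^{\forall}_{\Delta,i}t^{\forall} + D^{\exists}_{i\cdot}x - b^{\exists}_{\Delta,i}t^{\exists},
\end{align*}
after parametrizing $b^{\forall}_i=\Mid{b}^{\forall}_i+b^{\forall}_{\Delta,i}t^{\forall}$ with $t^{\forall}\in[-1,1]$ and likewise for $b^{\exists}$. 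The key observation is that the AE condition for this row is: for every choice of the universally quantified perturbations $D^{\forall}_{i\cdot}$ and $t^{\forall}$ there is a choice of existential perturbations making the equation hold.

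The second step is to evaluate the ranges. For the left side (after moving the $\forall$-terms over), the universal part sweeps over an interval centered at a point determined by $x$; its extreme contribution, as $D^{\forall}_{i\cdot}$ ranges over its box and $t^{\forall}$ over $[-1,1]$, has magnitude up to $\Rad{A}^{\forall}_{i\cdot}|x|+\Rad{b}^{\forall}_i$ in absolute value — here I use $|D^{\forall}_{i\cdot}x|\le\Rad{A}^{\forall}_{i\cdot}|x|$ with equality attainable by choosing signs of $D^{\forall}_{i\cdot}$ to align with $\sgn(x_j)$. Symmetrically, the set of values reachable by the existential part $D^{\exists}_{i\cdot}x-b^{\exists}_{\Delta,i}t^{\exists}$ is exactly the symmetric interval $[-r^{\exists},r^{\exists}]$ with $r^{\exists}=\Rad{A}^{\exists}_{i\cdot}|x|+\Rad{b}^{\exists}_i$, again because the constraint sets are boxes and the map is linear, so the image is the full interval and all intermediate values are hit.

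Putting these together, the row-$i$ AE condition is that for every $u$ in an interval of radius $\Rad{A}^{\forall}_{i\cdot}|x|+\Rad{b}^{\forall}_i$ about $\Mid{A}_{i\cdot}x-\Mid{b}_i$ the value $u$ lies in $[-r^{\exists},r^{\exists}]$; i.e., the whole $\forall$-interval is contained in the $\exists$-interval. Since both are intervals and the $\exists$-interval is centered at $0$, containment is equivalent to
\begin{align*}
|\Mid{A}_{i\cdot}x-\Mid{b}_i| + \Rad{A}^{\forall}_{i\cdot}|x| + \Rad{b}^{\forall}_i \;\le\; \Rad{A}^{\exists}_{i\cdot}|x| + \Rad{b}^{\exists}_i,
\end{align*}
which rearranges to the $i$th component of the claimed inequality. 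Running over all $i$ gives the vector inequality. The main subtlety to get right is the quantifier order translating into interval containment (the $\forall$-set must sit inside the $\exists$-set, not merely intersect it), and the fact that the reachable set of a linear image of a box is genuinely the full interval between its endpoints so that no gaps appear; both are elementary once stated, but they are where a careless argument would go wrong. An appeal to Theorem~\ref{thmAErohn} is also possible as a shortcut, but giving the self-contained row-wise argument above is cleaner and makes the inequality's structure transparent.
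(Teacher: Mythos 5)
Your proof is correct, but note that the paper itself contains no proof of Theorem~\ref{thmAErohn}: the result is quoted from Shary \cite{Sha2002}, and the nearest internal analogue is the proof of Proposition~\ref{proAEineq}, where the quantifiers are eliminated vector-wise by interval arithmetic (first replace the existential parameters by the upper endpoint of $\ivr{b}^{\exists}-\imace{A}^{\exists}x$, then the universal ones by the upper endpoint of $\imace{A}^{\forall}x-\ivr{b}^{\forall}$). Your row-wise argument is the equation counterpart of that technique: instead of comparing two suprema you require containment of the $\forall$-interval, centred at $(\Mid{A}x-\Mid{b})_i$ with radius $\big(\Rad{\mace{A}^{\forall}}|x|+\Rad{\vr{b}^{\forall}}\big)_i$, inside the symmetric $\exists$-interval of radius $\big(\Rad{\mace{A}^{\exists}}|x|+\Rad{\vr{b}^{\exists}}\big)_i$, and since the latter is centred at $0$ this containment is exactly the $i$th component of the stated inequality; running over $i$ gives the vector form. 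What your version buys is a self-contained elementary derivation that makes the two delicate points explicit (the image of a box under a linear form is the whole interval, so the reachable sets have no gaps; and the $\forall\exists$ order means containment, not mere intersection), whereas the interval-arithmetic formulation buys brevity and treats all rows at once. Three small repairs: the signs of the existential terms in your displayed identity are flipped relative to a direct expansion (it should read $-D^{\exists}_{i\cdot}x+(\Rad{\vr{b}^{\exists}})_i\,t^{\exists}$), which is harmless only because the existential parameter box is symmetric about zero, so say so; you should state explicitly why the problem decouples by rows, namely that the parameter boxes are Cartesian products over rows and the $i$th equation involves only the $i$th rows and components, so the existential response may be chosen row by row and the global $\forall\exists$ condition is the conjunction of the row conditions; and the closing remark about appealing to Theorem~\ref{thmAErohn} as a shortcut is circular, since that is precisely the statement being proved, so delete it or replace it by a citation to \cite{Sha2002}.
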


\paragraph*{Goal.}
The purpose of this paper is to generalize the above characterization of AE solutions to general interval systems, including inequalities or mixed systems of equations and inequalities (Section~\ref{sAeSols}).
The second focus is on the related problem of AE solvability (Section~\ref{sAeSolty}), which is, however, a more difficult problem.

\section{AE solutions for the general case}\label{sAeSols}

\subsection{Description}

Before we state a characterization of AE solutions for the general case, we state a specific case of inequalities first.

\begin{proposition}\label{proAEineq}
A vector $x\in\R^n$ is an AE-solution to interval inequalities $\imace{A}x\leq\ivr{b}$ if and only if
\begin{align}\label{desAEineq}
\Mid{A}x-\Mid{b}\leq
\big(\Rad{\mace{A}^{\exists}}-\Rad{\mace{A}^{\forall}}\big)|x|
+\Rad{\vr{b}^{\exists}}-\Rad{\vr{b}^{\forall}}.
\end{align}
\end{proposition}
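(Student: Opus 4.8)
The plan is to reduce the inequality case to an elementwise analysis, since both the defining condition and the claimed characterization decouple row by row. Fix an index $i \in \seznam{m}$ and write the $i$-th row of $\imace{A}$ as $\ivr{a}_i^{\forall} + \ivr{a}_i^{\exists}$ and the $i$-th component of $\ivr{b}$ as $\ivr{b}_i^{\forall} + \ivr{b}_i^{\exists}$. The AE-solution condition states that for every $a_i^{\forall} \in \ivr{a}_i^{\forall}$ and every $b_i^{\forall} \in \ivr{b}_i^{\forall}$ there exist $a_i^{\exists} \in \ivr{a}_i^{\exists}$ and $b_i^{\exists} \in \ivr{b}_i^{\exists}$ with $(a_i^{\forall} + a_i^{\exists})x \le b_i^{\forall} + b_i^{\exists}$, equivalently $a_i^{\forall}x - b_i^{\forall} \le b_i^{\exists} - a_i^{\exists}x$. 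So the task is to compare the maximum of the left-hand expression over the universally quantified data with the maximum of the right-hand expression over the existentially quantified data.

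Next I would compute these two optima. For a fixed real vector $x$ and an interval row vector $\ivr{a}_i = [\,\Mid{a}_i - \Rad{a}_i,\ \Mid{a}_i + \Rad{a}_i\,]$, the classical interval-arithmetic identity gives $\max_{a_i \in \ivr{a}_i} a_i x = \Mid{a}_i x + \Rad{a}_i |x|$ and $\min_{a_i \in \ivr{a}_i} a_i x = \Mid{a}_i x - \Rad{a}_i |x|$, with the extremizer chosen componentwise according to $\sgn(x_j)$; similarly $\max b_i = \Mid{b}_i + \Rad{b}_i$ and $\min b_i = \Mid{b}_i - \Rad{b}_i$. The $\forall$-$\exists$ quantification over disjoint parameter sets makes the condition equivalent to
\begin{align*}
\max_{a_i^{\forall}, b_i^{\forall}} \big(a_i^{\forall}x - b_i^{\forall}\big)
\le
\min_{a_i^{\exists}, b_i^{\exists}} \big(b_i^{\exists} - a_i^{\exists}x\big),
\end{align*}
since the left side must be dominated by some achievable value on the right, and the hardest case is the worst (largest) left side against the best (smallest) right side — here it is essential that the two groups of variables are independent, so the quantifier order does not create coupling. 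Substituting the four optima, the left side becomes $\Mid{a}_i^{\forall}x + \Rad{a}_i^{\forall}|x| - \Mid{b}_i^{\forall} + \Rad{b}_i^{\forall}$ and the right side becomes $\Mid{b}_i^{\exists} - \Rad{b}_i^{\exists} - \Mid{a}_i^{\exists}x + \Rad{a}_i^{\exists}|x|$.

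Finally I would rearrange: using $\Mid{a}_i = \Mid{a}_i^{\forall} + \Mid{a}_i^{\exists}$ and $\Mid{b}_i = \Mid{b}_i^{\forall} + \Mid{b}_i^{\exists}$, moving all midpoint terms to one side collapses them into $\Mid{a}_i x - \Mid{b}_i$, while the radius terms give $(\Rad{a}_i^{\exists} - \Rad{a}_i^{\forall})|x| + \Rad{b}_i^{\exists} - \Rad{b}_i^{\forall}$, which is exactly the $i$-th row of \nref{desAEineq}. Ranging over all $i$ yields the stated vector inequality. The only subtle point — and the step I would be most careful about — is justifying that the interval quantification really does reduce to the single scalar inequality between the two extrema; this needs the observation that the optimal choice of $a_i^{\forall}$ and of $b_i^{\forall}$ does not depend on the subsequent choice of the existential data (the constraint is an affine inequality, linear in each parameter block separately), so the worst-case universal data can be fixed first without loss, and conversely any achievable right-hand value suffices to satisfy all universal choices precisely when it dominates the maximal left-hand value. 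Everything else is the routine interval-arithmetic evaluation of $\max a_i x$ recorded above.
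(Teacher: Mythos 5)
Your row-by-row setup and the interval evaluations of $\max_{a_i\in\ivr{a}_i}a_ix$ and $\min_{a_i\in\ivr{a}_i}a_ix$ are fine, but the key quantifier-elimination step is stated the wrong way around, and this is a genuine error, not a cosmetic one. For fixed universal data, the existential quantifier asks only for \emph{some} $a_i^{\exists}\in\ivr{a}_i^{\exists}$, $b_i^{\exists}\in\ivr{b}_i^{\exists}$ with $a_i^{\forall}x-b_i^{\forall}\le b_i^{\exists}-a_i^{\exists}x$, which holds precisely when the left-hand value is at most the \emph{maximum} of $b_i^{\exists}-a_i^{\exists}x$ over the existential intervals, i.e.\ $\ovr{b}_i^{\exists}-\Mid{a}_i^{\exists}x+\Rad{a}_i^{\exists}|x|$. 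Your reduction instead requires domination of the \emph{minimum} of the right-hand side (``the best (smallest) right side''), which is the condition that \emph{every} existential realization works; carried out consistently it gives $\Mid{A}x-\Mid{b}\le-\big(\Rad{\mace{A}^{\exists}}+\Rad{\mace{A}^{\forall}}\big)|x|-\Rad{\vr{b}^{\exists}}-\Rad{\vr{b}^{\forall}}$, i.e.\ the strong-solution condition $\Mid{A}x+\Rad{A}|x|\le\uvr{b}$, which is strictly stronger than \nref{desAEineq} whenever the existential radii are nonzero. Moreover, your evaluation of that minimum is internally inconsistent: $\min(b_i^{\exists}-a_i^{\exists}x)=\uvr{b}_i^{\exists}-\Mid{a}_i^{\exists}x-\Rad{a}_i^{\exists}|x|$, with a minus sign on $\Rad{a}_i^{\exists}|x|$, not the plus sign you wrote, and in the final rearrangement your $-\Rad{b}_i^{\exists}$ silently becomes $+\Rad{b}_i^{\exists}$; these compensating slips are what make the stated formula \nref{desAEineq} appear at the end despite the wrong reduction principle.

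The repair is exactly the paper's route, and it uses the independence of the two parameter groups that you correctly emphasize, only with the optimum taken in the other direction on the existential side: eliminate the existential quantifiers by replacing the right-hand side with its maximum $\ovr{\ivr{b}^{\exists}-\imace{A}^{\exists}x}$, then eliminate the universal quantifiers by replacing the left-hand side with its maximum $\ovr{\imace{A}^{\forall}x-\ivr{b}^{\forall}}$, so the characterization is ``max $\le$ max'': $\Mid{\mace{A}^{\forall}}x+\Rad{\mace{A}^{\forall}}|x|-\uvr{b}^{\forall}\le\ovr{b}^{\exists}-\Mid{\mace{A}^{\exists}}x+\Rad{\mace{A}^{\exists}}|x|$, which rearranges directly to \nref{desAEineq}. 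With that correction (and the sign bookkeeping redone accordingly), the rest of your argument goes through.
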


\begin{proof}
An AE solution must satisfy
\begin{align*}
 \forall A^{\forall}\in\imace{A}^{\forall}\,
\forall b^{\forall}\in\ivr{b}^{\forall}\,
\exists A^{\exists}\in\imace{A}^{\exists}\,
\exists b^{\exists}\in\ivr{b}^{\exists}:
{A}^{\forall}x-{b}^{\forall}\leq{b}^{\exists}-A^{\exists}x.
\end{align*}
By eliminating the existential  quantifiers, we equivalently get
\begin{align*}
\forall A^{\forall}\in\imace{A}^{\forall}\,
\forall b^{\forall}\in\ivr{b}^{\forall}:
{A}^{\forall}x-{b}^{\forall}
\leq\ovr{\ivr{b}^{\exists}-\imace{A}^{\exists}x},
\end{align*}
and by eliminating the universal quantifiers, we arrive at
\begin{align*}
\ovr{\imace{A}^{\forall}x-\ivr{b}^{\forall}}
\leq\ovr{\ivr{b}^{\exists}-\imace{A}^{\exists}x}.
\end{align*}
This condition can be formulated as
\begin{align*}
\Mid{\mace{A}^{\forall}}x
+\Rad{\mace{A}^{\forall}}|x|-\uvr{b}^{\forall}
\leq \ovr{b}^{\exists} -\Mid{\mace{A}^{\exists}}x
+\Rad{\mace{A}^{\exists}}|x|,
\end{align*}
which is equivalent to the form \nref{desAEineq}.
\end{proof}

Now, we extend the above results to an interval system in a general form. Consider a linear system 
\begin{align}\label{lsGen}
{A}x+{B}y={a},\ 
{C}x+{D}y\leq {b},\ x\geq0,
\end{align}
where the constraint matrices and right-hand side vectors vary in given
interval matrices $\imace{A}\in\IR^{m\times n}$, $\imace{B}\in\IR^{m\times n'}$, $\imace{C}\in\IR^{m'\times n}$, $\imace{D}\in\IR^{m'\times n'}$, and interval vectors $\ivr{a}\in\IR^{m}$, and $\ivr{b}\in\IR^{m'}$. We briefly denote this interval system as
\begin{align}\label{ilsGen}
\imace{A}x+\imace{B}y=\ivr{a},\ 
\imace{C}x+\imace{D}y\leq \ivr{b},\ x\geq0.
\end{align}
Each interval linear system can be transformed to this formulation \cite{Hla2012a}, so it serves as a general form of interval linear systems.

\begin{proposition}\label{proAEgen}
A pair of vectors $(x,y)\in\R^{n+n'}$ is an AE-solution to 
\nref{ilsGen}
if and only if
\begin{subequations}\label{desAEgen}
\begin{align}\label{desAEgen1}
|\Mid{A}x+\Mid{B}y-\Mid{a}|&\leq
 \big(\Rad{\mace{A}^{\exists}}-\Rad{\mace{A}^{\forall}}\big)x
+\big(\Rad{\mace{B}^{\exists}}-\Rad{\mace{B}^{\forall}}\big)|y|
+\Rad{\vr{a}^{\exists}}-\Rad{\vr{a}^{\forall}},\\\label{desAEgen2}
 \Mid{C}x+\Mid{D}y-\Mid{b}&\leq
 \big(\Rad{\mace{C}^{\exists}}-\Rad{\mace{C}^{\forall}}\big)x
+\big(\Rad{\mace{D}^{\exists}}-\Rad{\mace{D}^{\forall}}\big)|y|
+\Rad{\vr{b}^{\exists}}-\Rad{\vr{b}^{\forall}},\\
 x&\geq0.
\end{align}\end{subequations}
\end{proposition}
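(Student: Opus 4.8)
The plan is to reduce Proposition~\ref{proAEgen} to the two cases already in hand --- Theorem~\ref{thmAErohn} for the equation block and Proposition~\ref{proAEineq} for the inequality block --- by exploiting the fact that the $\forall\exists$ quantifier structure couples the two blocks only through the shared unknown $(x,y)$, not through any shared interval parameters. Concretely, the interval coefficients of $\imace{A},\imace{B},\ivr{a}$ are disjoint from those of $\imace{C},\imace{D},\ivr{b}$, so the quantified statement defining an AE solution splits as a conjunction: $(x,y)$ is an AE solution to \nref{ilsGen} iff it is simultaneously an AE solution to the interval equation system $\imace{A}x+\imace{B}y=\ivr{a}$ and an AE solution to the interval inequality system $\imace{C}x+\imace{D}y\leq\ivr{b}$, while separately satisfying $x\geq 0$ (which carries no interval data and so passes through untouched).

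First I would make this splitting rigorous: write the defining condition for an AE solution to \nref{ilsGen}, observe that the universally quantified parameters can be introduced block by block and likewise the existentially quantified ones, and that since no parameter appears in both blocks the order of quantifiers can be rearranged so that the equation-block quantifiers and inequality-block quantifiers act independently. This yields the conjunction of the two separate AE-solution conditions. Next I would apply Theorem~\ref{thmAErohn} to the equation part --- treating the unknown as the stacked vector $(x,y)$ and the coefficient matrix as $[\,\imace{A}\ \imace{B}\,]$ --- which gives $|\Mid{A}x+\Mid{B}y-\Mid{a}|\leq(\Rad{\mace{A}^{\exists}}-\Rad{\mace{A}^{\forall}})|x|+(\Rad{\mace{B}^{\exists}}-\Rad{\mace{B}^{\forall}})|y|+\Rad{\vr{a}^{\exists}}-\Rad{\vr{a}^{\forall}}$; then, using $x\geq 0$, replace $|x|$ by $x$ to obtain exactly \nref{desAEgen1}. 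Analogously, Proposition~\ref{proAEineq} applied to $[\,\imace{C}\ \imace{D}\,]$ on $(x,y)$ gives \nref{desAEgen2} after the same substitution $|x|=x$.

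The main obstacle, and the only place requiring genuine care, is justifying the quantifier splitting. One has to be sure that the $\forall\exists$ pattern is preserved: the universal quantifiers over $\imace{A}^{\forall},\imace{B}^{\forall},\ivr{a}^{\forall},\imace{C}^{\forall},\imace{D}^{\forall},\ivr{b}^{\forall}$ all come first, followed by the existential ones, and the matching of each existential witness is allowed to depend on \emph{all} preceding universal choices. Because the equation constraint never involves the inequality-block parameters and vice versa, the witness for one block can be chosen ignoring the other block's universal parameters, which is what licenses treating the two blocks as independent AE problems. I would also note briefly that the $x\geq 0$ sign restriction is what lets us drop the absolute values on the $x$-terms, and that the same reduction shows one may equally well derive \nref{desAEgen} directly by repeating the elimination-of-quantifiers argument from the proof of Proposition~\ref{proAEineq} on each row, but the modular route via the two prior results is cleaner. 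A short remark that every interval linear system can be brought to the form \nref{ilsGen} (already cited in the text) closes the loop on generality.
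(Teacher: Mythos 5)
Your proposal is correct and follows essentially the same route as the paper: the paper's proof simply applies Theorem~\ref{thmAErohn} to the equation block and Proposition~\ref{proAEineq} to the inequality block, using $x\geq0$ to replace $|x|$ by $x$. The only difference is that you spell out the quantifier-splitting argument (justified by the disjointness of the interval parameters in the two blocks), which the paper leaves implicit; this is a sound and welcome elaboration, not a different method.
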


\begin{proof}
\nref{desAEgen1} follows from Theorem~\ref{thmAErohn} applied on $\imace{A}x+\imace{B}y=\ivr{b}$ and utilizing nonnegativity of $x$. Similarly, \nref{desAEgen2} follows from Proposition~\ref{proAEineq} applied on $\imace{C}x+\imace{D}y\leq \ivr{a}$.
\end{proof}

In Table~\ref{tabAeFeas}, we list some special cases of AE solutions and feasibility.
For equations, we get almost the same results as in \cite{Fie2006}; the only difference is for strong solutions. However, in view of $\uvr{b}\leq\ovr{b}$ the condition $\umace{A}x\geq\ovr{b},\ \omace{A}x\leq\uvr{b},\ x\geq0$ equivalently draws $\umace{A}x=\ovr{b}=\omace{A}x=\uvr{b},\ x\geq0$, or $\Mid{A}x=\Mid{b}$, $\Rad{A}x=\Rad{b}=0$, which is the characterization from \cite{Fie2006}.
For weak and strong solutions of inequalities (both cases), the characterizations also coincide to known results \cite{Fie2006,Hla2013b,RohKre1994}. The other cases (tolerable and controllable solutions of inequalities) seem not to be published yet.

\begin{table*}[t]
\renewcommand\arraystretch{1.3}
\caption{Robust feasibility conditions for diverse AE solution types and linear programming formulations.\label{tabAeFeas}}
\begin{center}
\begin{tabular}{@{}ccccc@{}}
\toprule
solution type & $Ax=b$ &$Ax=b,\ x\geq0$ & $Ax\leq b$
 & $Ax\leq b,\ x\geq0$\\
\midrule 
weak &  $|\Mid{A}x-\Mid{b}|\leq\Rad{A}|x|+\Rad{b}$
 & $\umace{A}x\leq\ovr{b},\ \omace{A}x\geq\uvr{b},\ x\geq0$
 & $\Mid{A}x\leq\Rad{A}|x|+\ovr{b}$
 & $\umace{A}x\leq\ovr{b},\ x\geq0$\\
strong & $|\Mid{A}x-\Mid{b}|\leq-\Rad{A}|x|-\Rad{b}$
 & $\umace{A}x\geq\ovr{b},\ \omace{A}x\leq\uvr{b},\ x\geq0$
 & $\Mid{A}x+\Rad{A}|x|\leq\uvr{b}$
 & $\omace{A}x\leq\uvr{b},\ x\geq0$\\
tolerable & $|\Mid{A}x-\Mid{b}|\leq-\Rad{A}|x|+\Rad{b}$
 & $\omace{A}x\leq\ovr{b},\ \umace{A}x\geq\uvr{b},\ x\geq0$
 & $\Mid{A}x+\Rad{A}|x|\leq\ovr{b}$
 & $\omace{A}x\leq\uvr{b},\ x\geq0$\\
controllable & $|\Mid{A}x-\Mid{b}|\leq\Rad{A}|x|-\Rad{b}$
 & $\omace{A}x\leq\uvr{b},\ \umace{A}x\geq\ovr{b},\ x\geq0$
 & $\Mid{A}x-\Rad{A}|x|\leq\uvr{b}$
 & $\umace{A}x\leq\uvr{b},\ x\geq0$\\
\bottomrule
\end{tabular}
\end{center}
\end{table*}

\begin{proposition}
The set of AE solutions is a union of at most $2^{m'}$ convex polyhedral sets.
\end{proposition}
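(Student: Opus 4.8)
The plan is to read the claim off the explicit description in Proposition~\ref{proAEgen}. Let $S\subseteq\R^{n+n'}$ be the set of AE solutions of \nref{ilsGen}. By Proposition~\ref{proAEgen}, $(x,y)\in S$ exactly when \nref{desAEgen1}, \nref{desAEgen2} and $x\geq0$ all hold. First I would rewrite this system so that its only nonlinear ingredient is isolated: the constraint $x\geq0$ is affine; \nref{desAEgen2} is affine in $(x,y)$ apart from the term $(\Rad{\mace{D}^{\exists}}-\Rad{\mace{D}^{\forall}})|y|$ on its right-hand side; and the vector condition \nref{desAEgen1}, being of the form $|u|\leq v$, is equivalent to $u\leq v$ and $-u\leq v$ holding \emph{simultaneously} --- hence the outer modulus introduces no case split --- and both of these are again affine apart from the term $(\Rad{\mace{B}^{\exists}}-\Rad{\mace{B}^{\forall}})|y|$ on the right. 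Note that no $|x|$ terms occur, since $x\geq0$ has already been used in Proposition~\ref{proAEgen} to turn $|x|$ into $x$.

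Hence the sole non-convex ingredient is the piecewise-affine vector $|y|\in\R^{n'}$, which is affine on each closed sign orthant. For $\sigma\in\{-1,1\}^{n'}$ put $O_\sigma:=\{y\in\R^{n'}\mmid\diag(\sigma)y\geq0\}$; on $O_\sigma$ one has $|y|=\diag(\sigma)y$. Substituting this identity into \nref{desAEgen} and adjoining the affine constraints $\diag(\sigma)y\geq0$ turns the whole description into a finite system of affine inequalities in $(x,y)$, whose solution set $P_\sigma$ is a convex polyhedral set. Since the orthants cover $\R^{n'}$ and the substituted system is equivalent to \nref{desAEgen} on $O_\sigma$, we get $S=\bigcup_{\sigma\in\{-1,1\}^{n'}}P_\sigma$, a union of at most $2^{n'}$ convex polyhedral sets. (A point on a coordinate hyperplane of $\R^{n'}$ lies in several $O_\sigma$ simultaneously, which is harmless for a union.)

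I do not anticipate a genuine obstacle: given Proposition~\ref{proAEgen} the argument is bookkeeping, and the two points deserving care are the ones above --- that the left-hand modulus in \nref{desAEgen1} does not double the number of pieces, and that, because of $x\geq0$, the exponent is the number $n'$ of sign-unrestricted variables $y$ and not the total number of variables. For sharpness one may further observe that a coordinate $y_j$ need not be branched on whenever $|y_j|$ enters every row of \nref{desAEgen1} and \nref{desAEgen2} only through a coefficient of the sign that preserves convexity of the corresponding inequality (for instance when the $j$-th columns of $\Rad{\mace{B}^{\exists}}-\Rad{\mace{B}^{\forall}}$ and $\Rad{\mace{D}^{\exists}}-\Rad{\mace{D}^{\forall}}$ are nonpositive), so that in favourable instances considerably fewer than $2^{n'}$ pieces suffice.
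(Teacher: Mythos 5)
Your proof is correct and is essentially the paper's own argument: split $\R^{n'}$ into the sign orthants of the free variables $y$, substitute $|y|=\diag(s)y$ into the description from Proposition~\ref{proAEgen}, and observe that each piece is a convex polyhedral set, the outer modulus in \nref{desAEgen1} being handled by the two-sided linear inequalities. Note that your exponent is what the paper's proof actually establishes (the union ranges over $s\in\{\pm1\}^{n'}$), so the $2^{m'}$ in the statement should be read as $2^{n'}$, the number of sign-unrestricted variables.
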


\begin{proof}
Let $s\in\{\pm1\}^{n'}$. The AE solution set in the orthant $\diag(s)y\geq 0$ reads
\begin{align*}
\Mid{A}x+\Mid{B}y-\Mid{a}&\leq
 \big(\Rad{\mace{A}^{\exists}}-\Rad{\mace{A}^{\forall}}\big)x
+\big(\Rad{\mace{B}^{\exists}}-\Rad{\mace{B}^{\forall}}\big)\diag(s)y
+\Rad{\vr{a}^{\exists}}-\Rad{\vr{a}^{\forall}},\\
-\Mid{A}x-\Mid{B}y+\Mid{a}&\leq
 \big(\Rad{\mace{A}^{\exists}}-\Rad{\mace{A}^{\forall}}\big)x
+\big(\Rad{\mace{B}^{\exists}}-\Rad{\mace{B}^{\forall}}\big)\diag(s)y
+\Rad{\vr{a}^{\exists}}-\Rad{\vr{a}^{\forall}},\\
 \Mid{C}x+\Mid{D}y-\Mid{b}&\leq
 \big(\Rad{\mace{C}^{\exists}}-\Rad{\mace{C}^{\forall}}\big)x
+\big(\Rad{\mace{D}^{\exists}}-\Rad{\mace{D}^{\forall}}\big)\diag(s)y
+\Rad{\vr{b}^{\exists}}-\Rad{\vr{b}^{\forall}},\\
 x&\geq0
\end{align*}
or, equivalently,
\begin{subequations}\label{desPfThmDecomp}
\begin{align}
(\umace{A}^{\exists}+\omace{A}^{\forall})x
+\big(\Mid{B}-\Rad{\mace{B}^{\exists}}\diag(s)
   +\Rad{\mace{B}^{\forall}}\diag(s)\big)y&\leq
\ovr{a}^{\exists}+\uvr{a}^{\forall},\\
-(\omace{A}^{\exists}+\umace{A}^{\forall})x
+\big(-\Mid{B}-\Rad{\mace{B}^{\exists}}\diag(s)
   +\Rad{\mace{B}^{\forall}}\diag(s)\big)y&\leq
-\uvr{a}^{\exists}-\ovr{a}^{\forall},\\
(\umace{C}^{\exists}+\omace{C}^{\forall})x
+\big(\Mid{D}-\Rad{\mace{D}^{\exists}}\diag(s)
   +\Rad{\mace{D}^{\forall}}\diag(s)\big)y&\leq
\ovr{b}^{\exists}+\uvr{b}^{\forall},\\
 x&\geq0
\end{align}\end{subequations}
This is a linear system of inequalities, which describes a convex polyhedral set. Therefore, the overall AE solution set is a union of such polyhedral sets subject to to all sign vectors $s\in\{\pm1\}^{n'}$.
\end{proof}

As a consequence, we obtain the following method for finding an AE solution.

\begin{corollary}
A pair of vectors $(x,y)\in\R^{n+n'}$ is an AE-solution to 
\nref{ilsGen} if and inly if they satisfy \nref{desPfThmDecomp} for some $s\in\{\pm1\}^{n'}$.
\end{corollary}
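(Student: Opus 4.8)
The plan is to read the Corollary as a direct restatement of the preceding Proposition together with its proof, so that no new mathematical content is needed --- only a careful bookkeeping argument tying the two together. First I would recall that a pair $(x,y)$ is an AE solution to \nref{ilsGen} if and only if it satisfies the characterization \nref{desAEgen} of Proposition~\ref{proAEgen}. The first two lines of \nref{desAEgen}, namely the absolute-value inequality \nref{desAEgen1}, split into the pair of two-sided inequalities $\pm(\Mid{A}x+\Mid{B}y-\Mid{a})\leq(\cdots)$, exactly as written in the proof of the preceding Proposition; together with \nref{desAEgen2} and $x\geq0$ this is the system displayed (before the words ``or, equivalently'') in that proof, with the orthant-selecting substitution $|y|=\diag(s)y$.

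Next I would observe that the equivalence ``$|y|=\diag(s)y$ for the unique $s\in\{\pm1\}^{n'}$ with $\diag(s)y\geq0$'' means: $(x,y)$ satisfies \nref{desAEgen} if and only if there exists $s\in\{\pm1\}^{n'}$ such that $(x,y)$ lies in the orthant $\diag(s)y\geq0$ \emph{and} satisfies the orthant-specific system. (If $y$ has zero components the choice of $s$ on those components is immaterial, so ``there exists $s$'' is still correct; this is the only mild subtlety to mention.) Finally, the algebraic manipulation already carried out in the proof of the Proposition shows that, within a fixed orthant, this orthant-specific system is equivalent to \nref{desPfThmDecomp}; in particular the orthant constraint $\diag(s)y\geq0$ is implied or can be appended without changing the solution set, since \nref{desPfThmDecomp} was derived as an equivalent reformulation there. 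Chaining these equivalences gives exactly the claim: $(x,y)$ is an AE solution iff \nref{desPfThmDecomp} holds for some $s\in\{\pm1\}^{n'}$.

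I expect the only real obstacle to be purely expository rather than mathematical: making precise that the ``for some $s$'' quantifier in the Corollary correctly captures the ``for the sign pattern of $y$'' condition in the Proposition's proof, including the degenerate case $y_i=0$, and noting (for honesty) that the system \nref{desPfThmDecomp} does not itself mention the orthant constraint $\diag(s)y\geq0$, so one should either include that constraint or argue it is redundant given that the manipulation $\Rad{\mace{B}^{\exists}}|y|=\Rad{\mace{B}^{\exists}}\diag(s)y$ only used $\diag(s)y\geq0$ and hence any $s$ making \nref{desPfThmDecomp} hold also makes \nref{desAEgen} hold. A one-paragraph proof suffices: invoke Proposition~\ref{proAEgen} to reduce to \nref{desAEgen}; choose $s\in\{\pm1\}^{n'}$ with $\diag(s)y\geq0$; substitute $|y|=\diag(s)y$ and use the computation in the proof of the Proposition verbatim to pass to \nref{desPfThmDecomp}; conversely, if \nref{desPfThmDecomp} holds for some $s$, reverse the same algebra --- using $\Rad{\mace{B}^{\exists}}\diag(s)y\geq -\Rad{\mace{B}^{\exists}}|y|$ componentwise and similarly for the other terms, which is all that is needed --- to recover \nref{desAEgen}. (Note a typo in the statement to be fixed: ``if and inly if'' should read ``if and only if''.)
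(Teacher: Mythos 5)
Your overall route is the same as the paper's: read the corollary off Proposition~\ref{proAEgen} by substituting $|y|=\diag(s)y$ in the orthant $\diag(s)y\geq0$ and rearranging, which is exactly how \nref{desPfThmDecomp} is obtained in the proof of the preceding proposition. The forward direction of your argument (take $s=\sgn(y)$, so that $\diag(s)y=|y|$, split the absolute-value inequality \nref{desAEgen1} into two one-sided inequalities, and rewrite) is correct and is essentially all the paper does.

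The gap is in your converse, precisely at the point you flag and then dismiss: the orthant constraint $\diag(s)y\geq0$ is \emph{not} redundant, and the inequality you invoke does not establish it. To pass from \nref{desPfThmDecomp} with an arbitrary $s$ back to \nref{desAEgen1} you need $\big(\Rad{\mace{B}^{\exists}}-\Rad{\mace{B}^{\forall}}\big)\diag(s)y\leq\big(\Rad{\mace{B}^{\exists}}-\Rad{\mace{B}^{\forall}}\big)|y|$; since $\Rad{\mace{B}^{\exists}}-\Rad{\mace{B}^{\forall}}$ may have negative entries, the componentwise bound $\diag(s)y\leq|y|$ goes the wrong way on the $\forall$-radius term (and the inequality you cite, $\Rad{\mace{B}^{\exists}}\diag(s)y\geq-\Rad{\mace{B}^{\exists}}|y|$, is not the one needed). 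Concretely, take $m=1$, $n=m'=0$, $n'=1$, $\imace{B}=\imace{B}^{\forall}=[-1,1]$, $\imace{B}^{\exists}=[0,0]$, $\ivr{a}=[0,0]$: the only AE solution is $y=0$, yet \nref{desPfThmDecomp} with $s=1$ reduces to $y\leq0$, so $y=-1$ satisfies it while violating \nref{desAEgen}. Hence a correct proof (and, strictly read, a correct statement of the corollary) must retain the requirement $\diag(s)y\geq0$, i.e.\ take $s$ to be the sign vector of $y$ or append $\diag(s)y\geq0$ to \nref{desPfThmDecomp}; with that restriction your first-paragraph argument, which coincides with the paper's, goes through, but the claimed redundancy of the orthant constraint should be dropped.
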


\subsection{Attainment}

Given an AE solution $(x^*,y^*)$, the following natural question arises: For a realization of $\forall$-parameters, what are the values of $\exists$-parameters, for which $(x^*,y^*)$ remains to be a solution?

\begin{proposition}
Let $A^{\forall}\in\imace{A}^{\forall}$, $B^{\forall}\in\imace{B}^{\forall}$, $C^{\forall}\in\imace{C}^{\forall}$, $D^{\forall}\in\imace{D}^{\forall}$, $a^{\forall}\in\ivr{a}^{\forall}$, and $b^{\forall}\in\ivr{b}^{\forall}$. Then $(x^*,y^*)$ solves \nref{lsGen} for the setting
\begin{subequations}\label{eqPropAttain}
\begin{align}
A^{\exists}&=\Mid{\mace{A}^{\exists}}
  -\diag{(u)}\Rad{\mace{A}^{\exists}},\\
B^{\exists}&=\Mid{\mace{B}^{\exists}}
 - \diag{(u)}\Rad{\mace{B}^{\exists}}\diag{(\sgn(y^*))},\\ 
a^{\exists}&=\Mid{\vr{a}^{\exists}}
 +\diag{(u)}\Rad{\vr{b}^{\exists}},\\ 
C^{\exists}&=\umace{C}^{\exists},\\ 
D^{\exists}&=\Mid{\mace{D}^{\exists}}
 -\Rad{\mace{D}^{\exists}}\diag{(\sgn(y))},\\
b^{\exists}&=\ovr{b}^{\exists},
\end{align}\end{subequations}
where $u\in[-1,1]^m$ is defined entrywise as
\begin{align*}
u_i=\begin{cases}\displaystyle
\frac{(\Mid{A}x+\Mid{B}y-\Mid{a})_i}
{\big(\Rad{\mace{A}^{\exists}}x
 +\Rad{\mace{B}^{\exists}}|y|+\Rad{\vr{a}^{\exists}}\big)_i} 
& \mbox{if }
  \big(\Rad{\mace{A}^{\exists}}x
  +\Rad{\mace{B}^{\exists}}|y|+\Rad{\vr{a}^{\exists}}\big)_i>0,\\
1&\mbox{otherwise}.
\end{cases}
\end{align*}
\end{proposition}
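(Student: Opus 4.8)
The plan is to verify directly that the proposed setting of $\exists$-parameters lies in the prescribed interval matrices and makes $(x^*,y^*)$ satisfy the two constraint blocks of \nref{lsGen}. I would split the argument into two independent parts: the equality block $Ax+By=a$ and the inequality block $Cx+Dy\leq b$, since the choices of $A^{\exists},B^{\exists},a^{\exists}$ and of $C^{\exists},D^{\exists},b^{\exists}$ are decoupled.

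For the inequality block, I would first note that $C^{\exists}=\umace{C}^{\exists}$, $b^{\exists}=\ovr{b}^{\exists}$ trivially lie in their intervals, and that $D^{\exists}=\Mid{\mace{D}^{\exists}}-\Rad{\mace{D}^{\exists}}\diag(\sgn(y^*))$ lies in $\imace{D}^{\exists}$ because each entry equals $\umace{D}^{\exists}_{ij}$ or $\omace{D}^{\exists}_{ij}$ according to the sign of $y^*_j$. Then I would compute $(C^{\forall}+C^{\exists})x^* + (D^{\forall}+D^{\exists})y^*$, bound the $\forall$-terms from above by their worst case $\omace{C}^{\forall}x^*$ (using $x^*\geq0$) and the analogous bound for $D^{\forall}$, and observe that $D^{\exists}y^* = \Mid{\mace{D}^{\exists}}y^* - \Rad{\mace{D}^{\exists}}|y^*|$. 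Collecting terms, the left-hand side is at most $(\umace{C}^{\exists}+\omace{C}^{\forall})x^* + (\Mid{D}-\Rad{\mace{D}^{\exists}}\diag(\sgn(y^*))+\Rad{\mace{D}^{\forall}}\diag(\sgn(y^*)))y^*$, which is exactly the left-hand side of the third inequality in \nref{desPfThmDecomp} (with $s=\sgn(y^*)$); since $(x^*,y^*)$ is an AE solution, the earlier proposition guarantees this is $\leq \ovr{b}^{\exists}+\uvr{b}^{\forall} \leq b^{\exists}+b^{\forall}$, giving the claim.

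For the equality block, the key point is that $u\in[-1,1]^m$ — this follows because $(x^*,y^*)$ is an AE solution, so by \nref{desAEgen1} and $x^*\geq0$ we have $|\Mid{A}x^*+\Mid{B}y^*-\Mid{a}| \leq \Rad{\mace{A}^{\exists}}x^* + \Rad{\mace{B}^{\exists}}|y^*| + \Rad{\vr{a}^{\exists}}$ entrywise (after absorbing the $-\Rad{\cdot^{\forall}}$ terms), hence each defining quotient for $u_i$ has absolute value at most $1$, and the ``otherwise'' branch sets $u_i=1$. Given $u\in[-1,1]^m$, the matrices $A^{\exists}=\Mid{\mace{A}^{\exists}}-\diag(u)\Rad{\mace{A}^{\exists}}$, $B^{\exists}=\Mid{\mace{B}^{\exists}}-\diag(u)\Rad{\mace{B}^{\exists}}\diag(\sgn(y^*))$, and the vector $a^{\exists}$ lie in their respective interval hulls. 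It then remains to compute $(A^{\forall}+A^{\exists})x^*+(B^{\forall}+B^{\exists})y^* - (a^{\forall}+a^{\exists})$ and check it equals zero; using $A^{\exists}x^* = \Mid{\mace{A}^{\exists}}x^* - \diag(u)\Rad{\mace{A}^{\exists}}x^*$, $B^{\exists}y^* = \Mid{\mace{B}^{\exists}}y^* - \diag(u)\Rad{\mace{B}^{\exists}}|y^*|$, and the definition of $u_i$, the difference reduces (on coordinates where the denominator is positive) to $(\Mid{A}x^*+\Mid{B}y^*-\Mid{a})_i - u_i\cdot(\text{denominator})_i - (\text{contribution of }A^{\forall},B^{\forall},a^{\forall})$; one has to check the $\forall$-contributions cancel — here I would use that the statement implicitly also needs a compatible choice, but more carefully: the free $\forall$-parameters enter, so actually the cancellation should be read as ``for these fixed $\forall$-values, the $a^{\exists}$ can be further adjusted''. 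I expect the main obstacle to be exactly this bookkeeping: making sure the stated $a^{\exists}$ (which does not depend on $a^{\forall}$ or on $A^{\forall},B^{\forall}$) genuinely solves the equation, which likely requires interpreting the proposition as asserting solvability for the \emph{midpoint} realization of the $\forall$-parameters, or else the formula for $a^{\exists}$ needs the $\forall$-terms folded in; I would reconcile this by carefully re-deriving $u$ and $a^{\exists}$ so that $(A^{\forall}+A^{\exists})x^* + (B^{\forall}+B^{\exists})y^* = a^{\forall}+a^{\exists}$ holds identically, treating the worst-case $\forall$-terms as already accounted for in the AE-solution inequality.

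Once both blocks are checked, $x^*\geq0$ holds by hypothesis, so $(x^*,y^*)$ solves \nref{lsGen} for the given setting, completing the proof.
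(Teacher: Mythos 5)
Your direct verification of the inequality block is fine (and your route differs from the paper's, which does no computation at all: it observes that for fixed $\forall$-parameters $(x^*,y^*)$ is a weak solution of the residual system $(\imace{A}^{\exists}+A^{\forall})x+(\imace{B}^{\exists}+B^{\forall})y=\ivr{a}^{\exists}+a^{\forall}$, $(\imace{C}^{\exists}+C^{\forall})x+(\imace{D}^{\exists}+D^{\forall})y\leq\ivr{b}^{\exists}+b^{\forall}$, $x\geq0$, and then cites the attainment construction for weak solutions from \cite{Hla2013b}). The genuine gap is the equality block, which is the heart of the proposition and which you leave unresolved. Your suspicion there is justified: with $u$ and $a^{\exists}$ exactly as printed (independent of $A^{\forall},B^{\forall},a^{\forall}$), the residual $(A^{\forall}+A^{\exists})x^*+(B^{\forall}+B^{\exists})y^*-a^{\forall}-a^{\exists}$ computes to $(A^{\forall}-\Mid{\mace{A}^{\forall}})x^*+(B^{\forall}-\Mid{\mace{B}^{\forall}})y^*-(a^{\forall}-\Mid{\vr{a}^{\forall}})$, which vanishes only for the midpoint realization of the $\forall$-parameters; the statement is indeed loosely written (also $\Rad{\vr{b}^{\exists}}$ in the formula for $a^{\exists}$ should be $\Rad{\vr{a}^{\exists}}$, and $x,y$ should be $x^*,y^*$). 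But ending with \quo{I would reconcile this by carefully re-deriving $u$ and $a^{\exists}$} is not a proof; the equality half of the claim is simply missing from your argument.

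To close it along your own lines, fold the fixed $\forall$-realization into the numerator: put $u_i=\big((\Mid{\mace{A}^{\exists}}+A^{\forall})x^*+(\Mid{\mace{B}^{\exists}}+B^{\forall})y^*-\Mid{\vr{a}^{\exists}}-a^{\forall}\big)_i\,/\,\big(\Rad{\mace{A}^{\exists}}x^*+\Rad{\mace{B}^{\exists}}|y^*|+\Rad{\vr{a}^{\exists}}\big)_i$ whenever the denominator is positive, and $a^{\exists}=\Mid{\vr{a}^{\exists}}+\diag(u)\Rad{\vr{a}^{\exists}}$. Then $u\in[-1,1]^m$ follows from \nref{desAEgen1}: by the triangle inequality and $x^*\geq0$, the numerator is bounded in absolute value by $|\Mid{A}x^*+\Mid{B}y^*-\Mid{a}|+\Rad{\mace{A}^{\forall}}x^*+\Rad{\mace{B}^{\forall}}|y^*|+\Rad{\vr{a}^{\forall}}$, which by \nref{desAEgen1} is at most $\Rad{\mace{A}^{\exists}}x^*+\Rad{\mace{B}^{\exists}}|y^*|+\Rad{\vr{a}^{\exists}}$, i.e.\ the denominator (and when the denominator is zero the numerator is zero, so any $u_i\in[-1,1]$ works). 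With this $u$, using $A^{\exists}x^*=\Mid{\mace{A}^{\exists}}x^*-\diag(u)\Rad{\mace{A}^{\exists}}x^*$ and $B^{\exists}y^*=\Mid{\mace{B}^{\exists}}y^*-\diag(u)\Rad{\mace{B}^{\exists}}|y^*|$, the equality $(A^{\forall}+A^{\exists})x^*+(B^{\forall}+B^{\exists})y^*=a^{\forall}+a^{\exists}$ holds identically by the definition of $u$ --- exactly the cancellation you were looking for, and exactly what the cited weak-solution attainment result delivers after the $\forall$-parameters are fixed. So your plan is salvageable, but as submitted the equality part is a gap, not a completed step.
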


\begin{proof}
Since $(x^*,y^*)$ is an AE solution, it must be a weak solution to 
\begin{align*}
(\imace{A}^{\exists}+A^{\forall})x
 +(\imace{B}^{\exists}+B^{\forall})y
=\ivr{a}^{\exists}+a^{\forall},\ 
(\imace{C}^{\exists}+C^{\forall})x
 +(\imace{D}^{\exists}+D^{\forall})y
\leq \ivr{b}^{\exists}+b^{\forall},\ x\geq0.
\end{align*}
By Hlad\'{i}k \cite{Hla2013b}, $(x^*,y^*)$ solves the constraints for \nref{eqPropAttain}.
\end{proof}

\subsection{Complexity}

Checking whether a given pair $(x,y)$ is an AE solution is easy by checking  \nref{desAEgen}. On the other hand, computing an AE solution, or just checking whether there exists any AE solution, may be a computationally hard problem.

Some special cases of \nref{desAEgen} are polynomially solvable by reducing to linear system of equations and inequalities and utilizing polynomiality of linear programming \cite{Schr1998}.

\begin{proposition}
If $\Rad{\mace{B}^{\exists}}=0$ and $\Rad{\mace{D}^{\exists}}=0$, then computing an AE solution or checking its existence is a polynomial time problem.
\end{proposition}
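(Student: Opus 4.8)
The plan is to exploit the characterization \nref{desAEgen} and show that, under the stated assumptions, the AE solution set is described by a single system of linear inequalities (no orthant case-split needed), whence finding a feasible point or certifying infeasibility reduces to linear programming, which is polynomial by \cite{Schr1998}.

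First I would recall that, by Proposition~\ref{proAEgen}, a pair $(x,y)$ is an AE solution iff it satisfies \nref{desAEgen1}, \nref{desAEgen2}, and $x\geq0$. The only nonlinearity there comes from the terms $\Rad{\mace{B}^{\exists}}|y|$ in \nref{desAEgen1} and $\Rad{\mace{D}^{\exists}}|y|$ in \nref{desAEgen2}; all other occurrences of $|y|$ are multiplied by $\Rad{\mace{B}^{\forall}}$ or $\Rad{\mace{D}^{\forall}}$, but those appear with a minus sign, so they cannot simply be dropped. This is the point where the hypothesis $\Rad{\mace{B}^{\exists}}=0$ and $\Rad{\mace{D}^{\exists}}=0$ enters: it kills precisely the problematic positive $|y|$-terms. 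After substituting these zeros, \nref{desAEgen1}--\nref{desAEgen2} become
\begin{align*}
|\Mid{A}x+\Mid{B}y-\Mid{a}|&\leq
 \big(\Rad{\mace{A}^{\exists}}-\Rad{\mace{A}^{\forall}}\big)x
-\Rad{\mace{B}^{\forall}}|y|
+\Rad{\vr{a}^{\exists}}-\Rad{\vr{a}^{\forall}},\\
 \Mid{C}x+\Mid{D}y-\Mid{b}&\leq
 \big(\Rad{\mace{C}^{\exists}}-\Rad{\mace{C}^{\forall}}\big)x
-\Rad{\mace{D}^{\forall}}|y|
+\Rad{\vr{b}^{\exists}}-\Rad{\vr{b}^{\forall}}.
\end{align*}

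Next I would linearize the remaining absolute values by the standard device of introducing an auxiliary vector $z\in\R^{n'}$ with $z\geq y$, $z\geq-y$, i.e.\ $z\geq|y|$ componentwise. Since $\Rad{\mace{B}^{\forall}}\geq0$ and $\Rad{\mace{D}^{\forall}}\geq0$ and they multiply $|y|$ with a negative sign, replacing $|y|$ by $z$ only weakens the right-hand side; hence $(x,y)$ satisfies the displayed inequalities for some choice of $z\geq|y|$ iff it satisfies them with $|y|$ itself (take $z=|y|$). Likewise the outer absolute value in the first inequality is eliminated by writing it as the pair of inequalities with $+(\Mid{A}x+\Mid{B}y-\Mid{a})$ and $-(\Mid{A}x+\Mid{B}y-\Mid{a})$ on the left. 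The resulting system in the variables $(x,y,z)$ is purely linear, so the existence of an AE solution is equivalent to feasibility of a linear program of polynomial size, and producing one amounts to solving it; by polynomiality of linear programming \cite{Schr1998} this is done in polynomial time.

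The main obstacle, such as it is, is the bookkeeping to justify the $z$-substitution in both directions: one must check that the negativity of the coefficient matrices $-\Rad{\mace{B}^{\forall}}$, $-\Rad{\mace{D}^{\forall}}$ makes the relaxation $|y|\mapsto z$ exact (an over-estimate of $z$ can only hurt, never help), and that nothing analogous goes wrong on the left-hand side after splitting the outer modulus. Everything else is a routine reduction to linear programming; no genuinely hard combinatorial step remains once the $\exists$-radii of $\imace{B}$ and $\imace{D}$ are zero.
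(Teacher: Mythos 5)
Your proposal is correct and follows essentially the same route as the paper: substitute $\Rad{\mace{B}^{\exists}}=\Rad{\mace{D}^{\exists}}=0$ into the description \nref{desAEgen}, split the outer absolute value, and linearize the remaining $|y|$ terms by an auxiliary variable $z$ with $y\leq z$, $-y\leq z$, which is exact because $\Rad{\mace{B}^{\forall}},\Rad{\mace{D}^{\forall}}\geq0$ enter with a minus sign, reducing everything to a polynomial-size linear feasibility problem. This is precisely the paper's construction of the lifted polyhedron \nref{desPfThmPol} whose $(x,y)$-projection is the AE solution set.
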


\begin{proof}
Under the assumption, the AE solution set is described by
\begin{align*}
|\Mid{A}x+\Mid{B}y-\Mid{a}|&\leq
 \big(\Rad{\mace{A}^{\exists}}-\Rad{\mace{A}^{\forall}}\big)x
-\Rad{\mace{B}^{\forall}}|y|
+\Rad{\vr{a}^{\exists}}-\Rad{\vr{a}^{\forall}},\\
 \Mid{C}x+\Mid{D}y-\Mid{b}&\leq
 \big(\Rad{\mace{C}^{\exists}}-\Rad{\mace{C}^{\forall}}\big)x
-\Rad{\mace{D}^{\forall}}|y|
+\Rad{\vr{b}^{\exists}}-\Rad{\vr{b}^{\forall}},\quad
 x\geq0.
\end{align*}
Equivalently, it is the projection to the $(x,y)$-subspace of the convex polyhedral set described in $(x,y,z)$-space as
\begin{align*}
\Mid{A}x+\Mid{B}y-\Mid{a}
+\Rad{\mace{B}^{\forall}}z&\leq
 \big(\Rad{\mace{A}^{\exists}}-\Rad{\mace{A}^{\forall}}\big)x
+\Rad{\vr{a}^{\exists}}-\Rad{\vr{a}^{\forall}},\\
-\Mid{A}x-\Mid{B}y+\Mid{a}
+\Rad{\mace{B}^{\forall}}z&\leq
 \big(\Rad{\mace{A}^{\exists}}-\Rad{\mace{A}^{\forall}}\big)x
+\Rad{\vr{a}^{\exists}}-\Rad{\vr{a}^{\forall}},\\
 \Mid{C}x+\Mid{D}y-\Mid{b}
+\Rad{\mace{D}^{\forall}}z&\leq
 \big(\Rad{\mace{C}^{\exists}}-\Rad{\mace{C}^{\forall}}\big)x
+\Rad{\vr{b}^{\exists}}-\Rad{\vr{b}^{\forall}},\\\
 x&\geq0,\quad y\leq z,\quad -y\leq z,
\end{align*}
or, in an alternative form
\begin{subequations}\label{desPfThmPol}
\begin{align}
(\umace{A}^{\exists}+\omace{A}^{\forall})x
+\Mid{B}y
+\Rad{\mace{B}^{\forall}}z&\leq
\ovr{a}^{\exists}+\uvr{a}^{\forall},\\
-(\omace{A}^{\exists}+\umace{A}^{\forall})x
-\Mid{B}y
+\Rad{\mace{B}^{\forall}}z&\leq
-\uvr{a}^{\exists}-\ovr{a}^{\forall},\\
(\umace{C}^{\exists}+\omace{C}^{\forall})x
+\Mid{D}y
+\Rad{\mace{D}^{\forall}}z&\leq
\ovr{b}^{\exists}+\uvr{b}^{\forall},\\
 x&\geq0,\quad y\leq z,\quad -y\leq z.
\end{align}\end{subequations}
\end{proof}

In general, however, the problem of finding an AE solution is NP-hard. It remains NP-hard even on the following sub-cases:
\begin{itemize}
\item
weak solutions to $\imace{A}x=\ivr{b}$; see \cite{Fie2006,KreLak1998,LakNos1994} 
\item
weak solutions to $\imace{A}x\leq\ivr{b}$; see \cite{Fie2006} 
\item
controllable solutions to $\imace{A}x=\ivr{b}$; see \cite{Fie2006,LakNos1994} 
\end{itemize}

\section{AE solvability}\label{sAeSolty}

The interval systems \nref{ilsGen} is called \emph{AE solvable} if for each realization of $\forall$-parameters there are realizations of $\exists$-parameters such that \nref{lsGen} has a solution. Formally,  \nref{ilsGen} is  AE solvable if 
\begin{align*}
&\forall A^{\forall}\in\imace{A}^{\forall},
\forall B^{\forall}\in\imace{B}^{\forall},
\forall C^{\forall}\in\imace{C}^{\forall},
\forall D^{\forall}\in\imace{D}^{\forall},
\forall a^{\forall}\in\ivr{a}^{\forall},
\forall b^{\forall}\in\ivr{b}^{\forall},\\
&\exists A^{\exists}\in\imace{A}^{\exists},
\exists B^{\exists}\in\imace{B}^{\exists},
\exists C^{\exists}\in\imace{C}^{\exists},
\exists D^{\exists}\in\imace{D}^{\exists},
\exists a^{\exists}\in\ivr{a}^{\exists},
\exists b^{\exists}\in\ivr{b}^{\exists}:\,  
(A^{\forall}+A^{\exists})x=b^{\forall}+b^{\exists}
\end{align*}
has a solution.

\subsection{When AE solvability meets AE solution existence}\label{ssAeSoltySols}

Notice that as long as \nref{ilsGen} has an AE solution, then it is AE solvable, but the converse implication does not hold in general. For example, the interval system of equations
$$
x_1+x_2=[1,2]
$$
is strongly solvable (solvable for each realization), but there is no strong solution common to all realizations of the interval.

Surprisingly, for strong solvability interval inequalities  $\imace{A}x\leq \ivr{b},$ or $\imace{A}x\leq \ivr{b}$, $x\geq0$ we have equivalence. By \cite{Fie2006,RohKre1994}, there is a strong solution if and only if the interval inequality system is strongly solvable. In Hlad\'{i}k \cite{Hla2013b}, an extended version combining both nonnegative and free variables was presented.

\begin{theorem}[\cite{Hla2013b}]\label{thmStrSolv}
The interval system $\imace{A}x+\imace{B}y\leq \ivr{b}$, $x\geq0$ is strongly solvable if and only if 
$$\omace{A}x+\omace{B}y^1-\umace{B}y^2\leq \uvr{b},\ \ x,y^1,y^2\geq0$$
is solvable.
\end{theorem}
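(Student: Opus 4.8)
The plan is to prove the two directions separately, viewing strong solvability as a statement quantified over all realizations $A\in\imace{A}$, $B\in\imace{B}$, $b\in\ivr{b}$, and reducing it to a single deterministic linear feasibility problem. For the easy direction, suppose the stated system $\omace{A}x+\omace{B}y^1-\umace{B}y^2\leq\uvr{b}$, $x,y^1,y^2\geq0$ has a solution $(x,y^1,y^2)$; set $y:=y^1-y^2$. Then for every $A\in\imace{A}$, $B\in\imace{B}$, $b\in\ivr{b}$ we have $Ax\leq\omace{A}x$ (using $x\geq0$) and $By=By^1-By^2\leq\omace{B}y^1-\umace{B}y^2$ (using $y^1,y^2\geq0$ and $\umace{B}\leq B\leq\omace{B}$), hence $Ax+By\leq\omace{A}x+\omace{B}y^1-\umace{B}y^2\leq\uvr{b}\leq b$. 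So $(x,y)$ is in fact a strong solution, and a fortiori the system is strongly solvable.

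For the converse, assume $\imace{A}x+\imace{B}y\leq\ivr{b}$, $x\geq0$ is strongly solvable. The key observation is that, because $x\geq0$ and because left-hand sides are maximized/minimized at the endpoints, the inequality $Ax+By\leq b$ holds for \emph{all} realizations as soon as it holds for the single ``worst'' realization: take $A=\omace{A}$, $b=\uvr{b}$, and for $B$ take the matrix whose columns are chosen according to the sign pattern of $y$, namely $B_{\cdot j}=\omace{B}_{\cdot j}$ if $y_j\geq0$ and $B_{\cdot j}=\umace{B}_{\cdot j}$ if $y_j<0$; equivalently, writing $y=y^1-y^2$ with $y^1=\max\{y,0\}$, $y^2=\max\{-y,0\}\geq0$, this worst realization gives exactly $\omace{A}x+\omace{B}y^1-\umace{B}y^2\leq\uvr{b}$. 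Now strong solvability means: for \emph{each} realization there exists some solution. In particular, consider the realizations obtained by fixing $A=\omace{A}$, $b=\uvr{b}$ and letting $B$ range over all $2^{n'}$ vertex matrices with columns in $\{\umace{B}_{\cdot j},\omace{B}_{\cdot j}\}$. The crux is to show that strong solvability already forces a \emph{common} point $(x,y^1,y^2)\geq0$ feasible for the single system in the statement; this is where the real content lies, and it is exactly the type of argument underlying Theorem~\ref{thmAErohn} and Proposition~\ref{proAEineq}: one rewrites ``solvable for all realizations'' via quantifier elimination, using $\overline{\imace{B}y}=\Mid{B}y+\Rad{B}|y|$, collapsing the universal quantifier over $B$ to the upper bound of the corresponding interval expression, and then recognizes the resulting condition $\Mid{A}x+\Rad{A}x+\Mid{B}y+\Rad{B}|y|\leq\uvr{b}$, $x\geq0$ as precisely the feasibility of $\omace{A}x+\omace{B}y^1-\umace{B}y^2\leq\uvr{b}$, $x,y^1,y^2\geq0$ after the substitution $|y|=y^1+y^2$, $y=y^1-y^2$.

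Concretely, the steps in order are: (i) normalize the problem by splitting $y=y^1-y^2$ with $y^1,y^2\geq0$ and noting $|y|\leq y^1+y^2$ with equality attainable; (ii) prove the ``worst realization'' lemma — strong satisfaction of $Ax+By\leq b$ for a fixed $(x,y)$ with $x\geq0$ is equivalent to $\omace{A}x+\Mid{B}y+\Rad{B}|y|\leq\uvr{b}$; (iii) rephrase strong solvability as the existence of $(x,y)$ with $x\geq0$ satisfying this single inequality, which after the substitution in (i) is the system in the statement; (iv) combine with the easy direction. I expect step (iii) to be the main obstacle, because it is not immediately obvious that ``for each realization, some solution'' upgrades to ``some solution for all realizations'': the quantifier order is $\forall B\,\exists(x,y)$, not $\exists(x,y)\,\forall B$. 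Resolving it requires observing that the per-realization feasible regions, as $B$ varies, are all ``dominated'' by the single hardest realization in (ii), so that a point feasible for the hardest one is feasible for all — i.e. the family of constraints has a pointwise maximal member, which is the substance of the RohnKreinovich/Fiedler-type result and of Hlad\'ik \cite{Hla2013b}.
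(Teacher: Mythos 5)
Your first direction is fine (and in fact proves more: a solution of the deterministic system is a strong solution, not merely a witness of solvability per realization). The problem is the converse, and it sits exactly at the step you yourself flag as the main obstacle. Your proposed resolution --- that ``the per-realization feasible regions, as $B$ varies, are all dominated by the single hardest realization,'' so that $\forall B\,\exists(x,y)$ upgrades to $\exists(x,y)\,\forall B$ --- is not available here. The worst-case matrix you construct in step (ii) depends on the sign pattern of $y$, so it is a hardest realization only relative to a fixed candidate $(x,y)$; there is no single realization dominating all others once $y$ is free. (Already for scalar $\imace{B}=[-1,1]$ and the constraint $By\leq b$, a solution for $B=1$ need not solve the instance $B=-1$.) This is precisely why the theorem has nontrivial content, and why the analogous equivalence fails for equations and for general mixed systems (cf.\ Example~\ref{exFail}). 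Appealing at that point to the Rohn--Kreslov\'a result or to \cite{Hla2013b} is circular, since the statement to be proved \emph{is} that equivalence; note the present paper itself only cites \cite{Hla2013b} and gives no proof, so nothing in the paper can be invoked to patch this step.

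The missing idea is a duality argument rather than a monotonicity one. Assume the system $\omace{A}x+\omace{B}y^1-\umace{B}y^2\leq\uvr{b}$, $x,y^1,y^2\geq0$ is infeasible. By the Farkas lemma there is $p\geq0$ with $\omace{A}^Tp\geq0$, $\omace{B}^Tp\geq0$, $\umace{B}^Tp\leq0$ and $\uvr{b}^Tp<0$. Since for every column $j$ one has $\umace{B}_{\cdot j}^Tp\leq0\leq\omace{B}_{\cdot j}^Tp$, a suitable convex combination of the two bounds yields a matrix $B\in\imace{B}$ with $B^Tp=0$ (chosen columnwise). Then $p$ is a Farkas certificate of infeasibility for the single realization $\omace{A}x+By\leq\uvr{b}$, $x\geq0$ (here $A=\omace{A}$, $b=\uvr{b}$), because $\omace{A}^Tp\geq0$, $B^Tp=0$, $p\geq0$ and $\uvr{b}^Tp<0$; this contradicts strong solvability. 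This columnwise convex-combination construction of a ``dual-killing'' realization (or an equivalent LP-duality argument) is what your proposal lacks, and without it the quantifier exchange in your step (iii) is unjustified.
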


For AE solvability of interval inequalities, we have the following generalization.

\begin{proposition}
For the interval system
\begin{align}\label{ineqAeSolvSolv}
\imace{A}x+\imace{B}^{\forall}y\leq \ivr{b},\ x\geq0
\end{align}
the following are equivalent
\begin{enumerate}[(i)]
\item
$x,y$ is an AE solution of \nref{ineqAeSolvSolv},
\item
$x,y$ solves
\begin{align}\label{desAEineqNonnegAbs}
(\umace{A}^{\exists}+\omace{A}^{\forall})x
+\Mid{\mace{B}^{\forall}}y+\Rad{\mace{B}^{\forall}}|y|
\leq \ovr{b}^{\exists}+\uvr{b}^{\forall},\ \ 
 x\geq0,
\end{align}
\item
$x,y$ solves
\begin{align}\label{desAEineqNonneg}
y=y^1-y^2,\ \ 
(\umace{A}^{\exists}+\omace{A}^{\forall})x
+\omace{B}^{\forall}y^1-\umace{B}^{\forall}y^2
\leq \ovr{b}^{\exists}+\uvr{b}^{\forall},\ \ 
 x,y^1,y^2\geq0.
\end{align}
\end{enumerate}
\end{proposition}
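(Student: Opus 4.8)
The plan is to prove the chain $(i)\Leftrightarrow(ii)\Leftrightarrow(iii)$, exploiting the fact that in \nref{ineqAeSolvSolv} the matrix multiplying $y$ is entirely $\forall$-quantified (i.e.\ $\imace{B}=\imace{B}^{\forall}$, so $\Rad{\mace{B}^{\exists}}=0$), and also that $\imace{D}$, $\imace{C}$ and the equation block are absent.

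\textbf{$(i)\Leftrightarrow(ii)$.} First I would apply Proposition~\ref{proAEineq} (or the inequality part \nref{desAEgen2} of Proposition~\ref{proAEgen}) to \nref{ineqAeSolvSolv}. With $\imace{B}^{\exists}$ having zero radius, condition \nref{desAEgen2} becomes
\begin{align*}
\Mid{A}x+\Mid{\mace{B}^{\forall}}y-\Mid{b}\leq
\big(\Rad{\mace{A}^{\exists}}-\Rad{\mace{A}^{\forall}}\big)x
-\Rad{\mace{B}^{\forall}}|y|+\Rad{\vr{b}^{\exists}}-\Rad{\vr{b}^{\forall}},\ \ x\geq0.
\end{align*}
Moving the $|y|$-term to the left and using the midpoint--radius identities $\Mid{A}+\Rad{\mace{A}^{\exists}}x$ on the existential part and $\Mid{A}-\Rad{\mace{A}^{\forall}}x$ on the universal part (exactly the manipulation already carried out at the end of the proof of Proposition~\ref{proAEineq}, and in \nref{desPfThmDecomp}), the left side collapses to $(\umace{A}^{\exists}+\omace{A}^{\forall})x+\Mid{\mace{B}^{\forall}}y+\Rad{\mace{B}^{\forall}}|y|$ and the right side to $\ovr{b}^{\exists}+\uvr{b}^{\forall}$, which is precisely \nref{desAEineqNonnegAbs}. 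This step is essentially bookkeeping using identities $\Mid{A}+\Rad{A}=\omace{A}$, $\Mid{A}-\Rad{A}=\umace{A}$ applied blockwise to the $\forall$ and $\exists$ parts; no new idea is needed.

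\textbf{$(ii)\Leftrightarrow(iii)$.} Here I would show that the nonlinear system \nref{desAEineqNonnegAbs}, which contains the absolute value $|y|$, is solvable in $(x,y)$ iff the linear system \nref{desAEineqNonneg} is solvable in $(x,y^1,y^2)$. For the direction $(iii)\Rightarrow(ii)$: given a solution of \nref{desAEineqNonneg}, set $y=y^1-y^2$; then $\Mid{\mace{B}^{\forall}}y+\Rad{\mace{B}^{\forall}}|y|\leq \Mid{\mace{B}^{\forall}}(y^1-y^2)+\Rad{\mace{B}^{\forall}}(y^1+y^2)=\omace{B}^{\forall}y^1-\umace{B}^{\forall}y^2$ (using $|y^1-y^2|\leq y^1+y^2$ entrywise and $\Rad{\mace{B}^{\forall}}\geq0$), so \nref{desAEineqNonnegAbs} follows. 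For $(ii)\Rightarrow(iii)$: given $(x,y)$ solving \nref{desAEineqNonnegAbs}, take $y^1:=\max(y,0)$ and $y^2:=\max(-y,0)$ (the positive and negative parts), so that $y=y^1-y^2$, $y^1,y^2\geq0$, $y^1+y^2=|y|$, and then $\omace{B}^{\forall}y^1-\umace{B}^{\forall}y^2=\Mid{\mace{B}^{\forall}}(y^1-y^2)+\Rad{\mace{B}^{\forall}}(y^1+y^2)=\Mid{\mace{B}^{\forall}}y+\Rad{\mace{B}^{\forall}}|y|$, giving \nref{desAEineqNonneg} with equality in the $y$-split. This is the same standard trick used in Theorem~\ref{thmStrSolv}.

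\textbf{Main obstacle.} There is no deep obstacle; the only point requiring care is the sign bookkeeping in $(i)\Leftrightarrow(ii)$ — making sure the $\forall$/$\exists$ decomposition of $\imace{A}$ is tracked correctly so that the $\exists$ part contributes its \emph{upper} endpoint $\umace{A}^{\exists}$ (coming from $\Mid{A}-\Rad{\mace{A}^{\exists}}$ after the inner minimization) and the $\forall$ part its \emph{upper} endpoint $\omace{A}^{\forall}$, yielding the combined coefficient $\umace{A}^{\exists}+\omace{A}^{\forall}$ that appears in all three of \nref{desAEineqNonnegAbs}, \nref{desAEineqNonneg}, and \nref{desPfThmDecomp}. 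I would simply cite Proposition~\ref{proAEineq} to avoid redoing the quantifier elimination, and present the rest as the two short implications above.
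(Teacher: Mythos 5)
Your proposal is correct and follows essentially the same route as the paper: the equivalence $(i)\Leftrightarrow(ii)$ is obtained by citing Proposition~\ref{proAEgen} (with $\Rad{\mace{B}^{\exists}}=0$) plus the midpoint--radius bookkeeping, and $(ii)\Leftrightarrow(iii)$ uses the same splitting $y^1=\max(y,0)$, $y^2=\max(-y,0)$ in one direction and the estimate $|y^1-y^2|\leq y^1+y^2$ with $\Rad{\mace{B}^{\forall}}\geq0$ in the other. The only blemish is a wording slip in your final paragraph: $\umace{A}^{\exists}=\Mid{\mace{A}^{\exists}}-\Rad{\mace{A}^{\exists}}$ is the \emph{lower} endpoint of $\imace{A}^{\exists}$, not the upper one, though the coefficient $\umace{A}^{\exists}+\omace{A}^{\forall}$ you arrive at is exactly right.
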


\begin{proof}
The equivalence \quo{$(i)\Leftrightarrow(ii)$} follows from Proposition~\ref{proAEgen}.

\quo{$(ii)\Rightarrow(iii)$}
Let $x,y$ be a solution of \nref{desAEineqNonnegAbs}. Put $y^1:=\max(y,0)$ the positive part and $y^2:=\max(-y,0)$ the negative part of $y$. Then $y=y^1-y^2$, $|y|=y^1+y^2$, and \nref{desAEineqNonnegAbs} takes the form of 
\begin{align*}
(\umace{A}^{\exists}+\omace{A}^{\forall})x
+\Mid{\mace{B}^{\forall}}(y^1-y^2)+\Rad{\mace{B}^{\forall}}(y^1+y^2)
\leq \ovr{b}^{\exists}+\uvr{b}^{\forall},\quad
 x,y^1,y^2\geq0,
\end{align*}
which is equivalent to \nref{desAEineqNonneg}.

\quo{$(ii)\Leftarrow(iii)$}
Let $x,y^1,y^2$ be a solution of \nref{desAEineqNonneg}, and put $y:=y^1-y^2$. Then 
\begin{align*}
(\umace{A}^{\exists}+\omace{A}^{\forall})x
+\Mid{\mace{B}^{\forall}}y+\Rad{\mace{B}^{\forall}}|y|
&=(\umace{A}^{\exists}+\omace{A}^{\forall})x
+\Mid{\mace{B}^{\forall}}(y^1-y^2)+\Rad{\mace{B}^{\forall}}|y^1-y^2|\\
&\leq(\umace{A}^{\exists}+\omace{A}^{\forall})x
+\Mid{\mace{B}^{\forall}}(y^1-y^2)+\Rad{\mace{B}^{\forall}}(y^1+y^2)\\
&\leq \ovr{b}^{\exists}+\uvr{b}^{\forall},\ \ 
 x\geq0,
\end{align*}
meaning that $x,y$ solves \nref{desAEineqNonnegAbs}.
\end{proof}

\begin{proposition}
The interval system \nref{ineqAeSolvSolv}
is AE solvable if and only if it has an AE solution.
\end{proposition}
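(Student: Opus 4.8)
The plan is to show that the interval system \nref{ineqAeSolvSolv} is AE solvable if and only if the real system \nref{desAEineqNonneg} (equivalently \nref{desAEineqNonnegAbs}) is solvable; combined with the previous proposition, which says \nref{desAEineqNonneg} is solvable exactly when \nref{ineqAeSolvSolv} has an AE solution, this gives the claimed equivalence. One direction is immediate and needs no work: if \nref{ineqAeSolvSolv} has an AE solution, then by definition every realization of the $\forall$-parameters admits a choice of $\exists$-parameters making \nref{lsGen} solvable (witnessed by that same AE solution), so the system is AE solvable. Hence the whole content is the converse: AE solvability implies the existence of an AE solution.

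For the converse, first I would eliminate the existential parameters. AE solvability says that for every $A^\forall\in\imace{A}^\forall$, every $B^\forall\in\imace{B}^\forall$, every $b^\forall\in\ivr{b}^\forall$, there exist $A^\exists\in\imace{A}^\exists$ and $b^\exists\in\ivr{b}^\exists$ such that $(A^\forall+A^\exists)x+B^\forall y\leq b^\forall+b^\exists$, $x\geq0$ has a solution $(x,y)$. Since $\imace{B}=\imace{B}^\forall$ here, the $\exists$-part sits only in $\imace{A}^\exists$ and $\ivr{b}^\exists$, and for fixed $\forall$-data the most favourable choice is $A^\exists=\umace{A}^\exists$ on the coordinates where $x\geq0$ acts (giving the smallest possible left-hand side, using $x\geq0$) and $b^\exists=\ovr{b}^\exist$ (the largest right-hand side). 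Thus AE solvability is equivalent to: for every $A^\forall\in\imace{A}^\forall$ and every $b^\forall\in\ivr{b}^\forall$, the real system $(\umace{A}^\exists+A^\forall)x+B^\forall y\leq \ovr{b}^\exists+b^\forall$, $x\geq0$ is solvable. This is precisely strong solvability of the interval inequality system $(\umace{A}^\exists+\imace{A}^\forall)x+\imace{B}^\forall y\leq \ovr{b}^\exists+\ivr{b}^\forall$, $x\geq0$, where now the interval matrix multiplying $x$ is $\umace{A}^\exists+\imace{A}^\forall$ and the interval right-hand side is $\ovr{b}^\exists+\ivr{b}^\forall$, with midpoint/radius data $\MID{(\umace{A}^\exists+\imace{A}^\forall)}$, $\RAD{(\umace{A}^\exists+\imace{A}^\forall)}=\Rad{\mace{A}^\forall}$, and analogously for the right-hand side.

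Next I would invoke Theorem~\ref{thmStrSolv} on this interval inequality system. It tells us that strong solvability is equivalent to solvability of
$$\omace{(\umace{A}^\exists+\imace{A}^\forall)}x+\omace{B}^\forall y^1-\umace{B}^\forall y^2\leq \uvr{(\ovr{b}^\exists+\ivr{b}^\forall)},\ \ x,y^1,y^2\geq0.$$
Now $\omace{(\umace{A}^\exists+\imace{A}^\forall)}=\umace{A}^\exists+\omace{A}^\forall$ and $\uvr{(\ovr{b}^\exists+\ivr{b}^\forall)}=\ovr{b}^\exists+\uvr{b}^\forall$, so this system is exactly \nref{desAEineqNonneg}. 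By the previous proposition, \nref{desAEineqNonneg} is solvable if and only if \nref{ineqAeSolvSolv} has an AE solution, which closes the loop.

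The main obstacle, and the place demanding care, is the reduction step: justifying that for fixed realizations of the $\forall$-parameters the worst-case (i.e.\ extreme) choice of $\exists$-parameters is exactly $A^\exists=\umace{A}^\exists$ and $b^\exists=\ovr{b}^\exists$, uniformly over $(x,y)$. The subtlety is that the system must be solvable for \emph{some} $(x,y)$, so one cannot pick the extreme realization per point; but since $x\geq0$ is enforced, $\umace{A}^\exists x\leq A^\exists x$ holds for every feasible $x$ and every $A^\exists\in\imace{A}^\exists$, and $b^\exists\leq \ovr{b}^\exists$ always, so any $(x,y)$ solving the inequality with $(\umace{A}^\exists,\ovr{b}^\exists)$ also solves it with any other $(A^\exists,b^\exists)$, and conversely a solution for a given $(A^\exists,b^\exists)$ is a fortiori a solution for $(\umace{A}^\exists,\ovr{b}^\exists)$; this monotonicity makes the extremal choice legitimate without a pointwise selection. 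Once this is in place, the rest is bookkeeping with midpoint/radius identities and a direct appeal to Theorem~\ref{thmStrSolv} and the preceding proposition.
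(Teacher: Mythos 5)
Your proposal is correct and follows essentially the same route as the paper: reduce AE solvability to strong solvability of the interval system $(\umace{A}^{\exists}+\imace{A}^{\forall})x+\imace{B}^{\forall}y\leq\ovr{b}^{\exists}+\ivr{b}^{\forall}$, $x\geq0$ using the monotonicity afforded by $x\geq0$, then invoke Theorem~\ref{thmStrSolv} and the preceding proposition. One remark: the sub-claim that a solution for $(\umace{A}^{\exists},\ovr{b}^{\exists})$ also solves the system for every other $(A^{\exists},b^{\exists})$ is false (the inequalities go the wrong way), but it is also unnecessary --- the extreme pair is itself an admissible realization of the $\exists$-parameters, which is all the definition of AE solvability requires, and the \emph{a fortiori} direction you state is the one actually used.
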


\begin{proof}
First we show that \nref{ineqAeSolvSolv} is AE solvable if and only if
\begin{align}\label{pfIneqAeSolvSolvStr}
(\umace{A}^{\exists}+\imace{A}^{\forall})x+\imace{B}^{\forall}y
\leq \ovr{b}^{\exists}+\ivr{b}^{\forall},\ \ x\geq0
\end{align}
is strongly solvable. If \nref{pfIneqAeSolvSolvStr} is strongly solvable, then for each ${A}^{\forall}\in\imace{A}^{\forall}$, ${B}^{\forall}\in\imace{B}^{\forall}$ and ${b}^{\forall}\in\ivr{b}^{\forall}$, and for the choice ${A}^{\exists}:=\umace{A}^{\exists}$ and ${b}^{\exists}:=\ovr{b}^{\exists}$, the system
\begin{align}\label{pfIneqAeSolvSolvInst}
({A}^{\exists}+{A}^{\forall})x+{B}^{\forall}y
\leq {b}^{\exists}+{b}^{\forall},\ \ x\geq0
\end{align}
has a solution. Contrary, if \nref{ineqAeSolvSolv} is AE solvable, then for each  ${A}^{\forall}\in\imace{A}^{\forall}$, ${B}^{\forall}\in\imace{B}^{\forall}$ and ${b}^{\forall}\in\ivr{b}^{\forall}$, there are ${A}^{\exists}\in\imace{A}^{\exists}$ and ${b}^{\exists}\in\ivr{b}^{\exists}$ such that the system \nref{pfIneqAeSolvSolvInst} is solvable. This implies that
\begin{align*}
(\umace{A}^{\exists}+{A}^{\forall})x+{B}^{\forall}y
\leq \ovr{b}^{\exists}+{b}^{\forall},\ x\geq0
\end{align*}
is solvable, too.

By Theorem~\ref{thmStrSolv}, the interval system \nref{pfIneqAeSolvSolvStr} is strongly solvable if and only if \nref{desAEineqNonneg} holds, that is, if and only if \nref{ineqAeSolvSolv} has an AE solution.
\end{proof}

Notice that the result from \cite{Fie2006,RohKre1994} cannot by generalized to any interval inequality system. Below, we give a counterexample.

\begin{example}\label{exFail}
Consider the interval system of inequalities
\begin{align*}
{A}^{\exists}x\leq -2,\ \ 
{A}^{\forall}x\leq 1,
\end{align*}
where ${A}^{\exists},{A}^{\forall}\in[-1,1]$. This interval system has no AE solution since the system \nref{desAEineq}, which takes the form of
\begin{align*}
2\leq|x|,\ \ |x|\leq1,
\end{align*}
has no solution.

In contrast, the interval system is AE solvable. If ${A}^{\forall}\geq0$, then we can take ${A}^{\exists}:=1$ and $x:=-2$. If ${A}^{\forall}\leq0$, then we can take ${A}^{\exists}:=-1$ and $x:=2$. In summary, the interval system of inequalities is AE solvable, but has no AE solution.
\end{example}

\subsection{Conditions for AE solvability}

Let us recall the characterization of weak solutions for a general system of interval equations and inequalities from Hlad\'{i}k \cite{Hla2013b}.

\begin{theorem}\label{thmHladik}
Let $\imace{A}\in\IR^{m\times n}$, $\imace{B}\in\IR^{m\times n'}$, $\imace{C}\in\IR^{m'\times n}$, $\imace{D}\in\IR^{m'\times n'}$, $\ivr{a}\in\IR^{m}$, and $\ivr{b}\in\IR^{m'}$. 
A pair $(x,y)$, $x\in\R^{m}$, $y\in\R^n$, is a weak solution to the interval system
\begin{align*}
\imace{A}x+\imace{B}y=\ivr{a},\ 
\imace{C}x+\imace{D}y\leq \ivr{b},\ x\geq0.
\end{align*}
if and only if there is $s\in\{\pm1\}^n$ such that
\begin{align*}
\umace{A}x+(\Mid{B}-\Rad{B}\diag(s))y&\leq\ovr{b},\\ 
-\omace{A}x-(\Mid{B}+\Rad{B}\diag(s))y&\leq-\uvr{b},\\ 
\umace{C}x+(\Mid{D}-\Rad{D}\diag(s))y&\leq\ovr{d},\ x\geq0.
\end{align*}
\end{theorem}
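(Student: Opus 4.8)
The plan is to reduce the weak-solvability of the mixed equation/inequality interval system to a finite union of ordinary linear feasibility problems, one per orthant of the free variable $y$, and then to show that within each orthant the right-hand side extremes can be pushed to the boundary of the intervals independently. First I would observe that $(x,y)$ with $x\geq 0$ is a weak solution if and only if there exist $A\in\imace{A}$, $B\in\imace{B}$, $C\in\imace{C}$, $D\in\imace{D}$, $a\in\ivr{a}$, $b\in\ivr{b}$ with $Ax+By=a$ and $Cx+Dy\leq b$. Fix the sign vector $s:=\sgn(y)\in\{\pm1\}^{n'}$ (ties broken arbitrarily), so that $\diag(s)y=|y|\geq 0$ and $y=\diag(s)|y|$. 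For any matrix $M\in\imace{M}$ one has the entrywise bounds, valid because $x\geq 0$ and $\diag(s)y\geq 0$,
\begin{align*}
(\umace{M}x + (\Mid{N}-\Rad{N}\diag(s))y) \;\leq\; Mx+Ny \;\leq\; (\omace{M}x + (\Mid{N}+\Rad{N}\diag(s))y),
\end{align*}
and each of the two bounding vectors is itself attained by a legal choice of $(M,N)\in\imace{M}\times\imace{N}$ — indeed the lower bound is attained at $M=\umace{M}$, $N=\Mid{N}-\Rad{N}\diag(s)$, and likewise for the upper bound. This is the core computation: the range of $Mx+Ny$ over the interval matrices, with $x\geq0$ fixed and $y$ in a fixed orthant, is exactly the box between those two vectors.

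Next I would handle the equation block $Ax+By=a$. Solvability of $Ax+By=a$ for some $A\in\imace{A}$, $B\in\imace{B}$, $a\in\ivr{a}$ is equivalent to the interval $\{Ax+By : A\in\imace{A}, B\in\imace{B}\}$ (which, by the above, is the box $[\,\umace{A}x+(\Mid{B}-\Rad{B}\diag(s))y,\ \omace{A}x+(\Mid{B}+\Rad{B}\diag(s))y\,]$) intersecting the interval $\ivr{a}=[\uvr{a},\ovr{a}]$ nontrivially, i.e.\ to the pair of inequalities $\umace{A}x+(\Mid{B}-\Rad{B}\diag(s))y\leq\ovr{a}$ and $\omace{A}x+(\Mid{B}+\Rad{B}\diag(s))y\geq\uvr{a}$; the latter is the second displayed line after multiplying by $-1$. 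Here I should double-check the statement: the excerpt writes $\ovr{b}$ and $\uvr{b}$ on the right of the first two lines and $\ovr{d}$ on the third, which look like typos for $\ovr{a}$, $\uvr{a}$, $\ovr{b}$ respectively — I would state and prove the corrected version. For the inequality block $Cx+Dy\leq b$, solvability for some $C\in\imace{C}$, $D\in\imace{D}$, $b\in\ivr{b}$ is equivalent to the \emph{smallest} attainable value of $Cx+Dy$ being $\leq$ the \emph{largest} attainable $b$, i.e.\ $\umace{C}x+(\Mid{D}-\Rad{D}\diag(s))y\leq\ovr{b}$, which is the third line. The key point making the reduction clean is that the equation and inequality blocks, and the different rows within a block, share no interval coefficients, so the extremal choices can be made independently and simultaneously.

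Finally I would assemble the two directions. For ``$\Rightarrow$'': given a weak solution $(x,y)$, take $s:=\sgn(y)$; the witnessing $A,B,C,D,a,b$ force the three displayed inequalities by the bounding argument above, so such an $s$ exists. For ``$\Leftarrow$'': given $s\in\{\pm1\}^{n'}$ satisfying the three inequalities, I must produce actual realizations. The subtlety is that the supplied $s$ need not equal $\sgn(y)$. However, if $s_j\neq\sgn(y_j)$ then $y_j=0$, in which case the $j$-th column of $B$ and $D$ is multiplied by zero and the sign choice is irrelevant; thus replacing $s$ by $\sgn(y)$ (with the same arbitrary tie-break) leaves all three inequalities intact, and then the explicit extremal matrices $A:=\umace{A}$, $B:=\Mid{B}-\Rad{B}\diag(s)$ for the lower equation bound — but we also need the \emph{upper} bound $\geq\uvr{a}$ — so one picks $a$ somewhere in the nonempty intersection of the attainable equation box with $\ivr{a}$, $C:=\umace{C}$, $D:=\Mid{D}-\Rad{D}\diag(s)$, and $b:=\ovr{b}$, yielding a genuine solution of the real system. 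I expect this last reconciliation between an arbitrary admissible $s$ and the true sign pattern of $y$ to be the only place requiring care; everything else is the routine interval-arithmetic bounding already used in Proposition~\ref{proAEineq}.
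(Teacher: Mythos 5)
The paper itself offers no proof of Theorem~\ref{thmHladik} — it is quoted from \cite{Hla2013b} — so there is nothing in-paper to compare against; judging your argument on its own, the overall strategy is the standard and correct one: fix the sign pattern of $y$, note that for $x\geq0$ and $\diag(s)y=|y|$ the attainable set of $Ax+By$ over $\imace{A}\times\imace{B}$ is exactly the box between $\umace{A}x+(\Mid{B}-\Rad{B}\diag(s))y$ and $\omace{A}x+(\Mid{B}+\Rad{B}\diag(s))y$ (rows independent), reduce the equation block to nonemptiness of the intersection of that box with $\ivr{a}$, and the inequality block to comparing the minimal attainable value of $Cx+Dy$ with $\ovr{b}$. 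You are also right that $\ovr{b},\uvr{b},\ovr{d}$ on the right-hand sides are typos for $\ovr{a},\uvr{a},\ovr{b}$, and the dimensions should read $x\in\R^{n}$, $y\in\R^{n'}$, $s\in\{\pm1\}^{n'}$.

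There is, however, one genuine error, precisely at the step you yourself flagged as the delicate one. In the ``$\Leftarrow$'' direction you justify replacing the given $s$ by $\sgn(y)$ with the claim that $s_j\neq\sgn(y_j)$ forces $y_j=0$. That is false: $s$ is an arbitrary sign vector for which the three inequalities happen to hold, and nothing ties it to $y$ (e.g.\ $y_j=5$ with $s_j=-1$ is possible). The conclusion you need is still true, but for a different reason, namely monotonicity: since $\Rad{B}\geq0$, $\Rad{D}\geq0$ and $s_jy_j\leq|y_j|$ componentwise, we have $\Rad{B}\diag(s)y\leq\Rad{B}|y|$ and $\Rad{D}\diag(s)y\leq\Rad{D}|y|$, hence
\begin{align*}
\umace{A}x+\Mid{B}y-\Rad{B}|y|&\leq \umace{A}x+(\Mid{B}-\Rad{B}\diag(s))y\leq\ovr{a},\\
\omace{A}x+\Mid{B}y+\Rad{B}|y|&\geq \omace{A}x+(\Mid{B}+\Rad{B}\diag(s))y\geq\uvr{a},\\
\umace{C}x+\Mid{D}y-\Rad{D}|y|&\leq \umace{C}x+(\Mid{D}-\Rad{D}\diag(s))y\leq\ovr{b},
\end{align*}
i.e.\ the three inequalities remain valid after replacing $s$ by $\sgn(y)$, and from there your box-intersection construction of admissible $A,B,C,D,a,b$ goes through. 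Substitute this monotonicity argument for the false claim and the proof is complete.
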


We will also employ the well known Farkas lemma. In particular, we utilize the following form from Hlad\'{i}k \cite{Hla2013b}.

\begin{lemma}\label{lmmFarkas}
Exactly one of the linear systems
\begin{align*}
Ax+By=b,\ 
Cx+Dy\leq d,\ x\geq0
\end{align*}
and
\begin{align*}
A^Tp+C^Tq\geq0,\ 
B^Tp+D^Tq=0,\ 
b^Tp+d^Tq\leq-1,\ q\geq0,
\end{align*}
is solvable.
\end{lemma}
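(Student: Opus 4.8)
The statement to prove is the Farkas-type Lemma~\ref{lmmFarkas}: exactly one of the primal system $Ax+By=b$, $Cx+Dy\leq d$, $x\geq0$ and the dual system $A^Tp+C^Tq\geq0$, $B^Tp+D^Tq=0$, $b^Tp+d^Tq\leq-1$, $q\geq0$ is solvable. The plan is to reduce this to a canonical form of Farkas' lemma that is already classical, and then carry the translation back and forth carefully.

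First I would recall a standard version of Farkas' lemma, say in the inequality form: exactly one of $\{Mz\leq h,\ z\geq 0\}$ and $\{M^Tw\geq 0,\ h^Tw<0,\ w\geq 0\}$ has a solution (the strict inequality $h^Tw<0$ being replaceable by $h^Tw\leq -1$ by homogeneity). To apply it, I would homogenize and rewrite the primal system in a purely-inequality, nonnegative-variable shape: split the free variable $y=y^+-y^-$ with $y^+,y^-\geq 0$, and replace the equation $Ax+By=b$ by the pair of inequalities $Ax+By\leq b$ and $-Ax-By\leq -b$. This gives a system $Mz\leq h$, $z\geq 0$ with $z=(x,y^+,y^-)$ and a block matrix $M$ built from $A,-A,C$ and $B,-B,D$ (with appropriate signs on the $y^-$ block). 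Then the classical alternative produces a dual vector $w$, which I would partition as $w=(p^+,p^-,q)$ with $p^+,p^-\geq 0$ corresponding to the two halves of the equation and $q\geq 0$ to the inequality $Cx+Dy\leq d$.

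Next I would set $p:=p^+-p^-$ (a free vector, since the two nonnegativity constraints combine). Computing the three blocks of $M^Tw\geq 0$: the $x$-block gives $A^T(p^+-p^-)+C^Tq = A^Tp+C^Tq\geq 0$; the $y^+$-block gives $B^Tp+D^Tq\geq 0$; and the $y^-$-block gives $-(B^Tp+D^Tq)\geq 0$; together the last two force $B^Tp+D^Tq=0$. Finally $h^Tw\leq -1$ becomes $b^Tp^+ - b^Tp^- + d^Tq = b^Tp+d^Tq\leq -1$. This is exactly the claimed dual system, and conversely any solution of the dual system yields a feasible $w$ for the classical alternative by setting $p^+:=\max(p,0)$, $p^-:=\max(-p,0)$. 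Since the classical Farkas alternative is an exclusive-or, so is ours.

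The only real point to be careful about — rather than a deep obstacle — is the bookkeeping of signs when splitting the equality constraint and the free variable simultaneously, making sure that the two "halves" of the equation produce a genuinely free dual multiplier $p$ while the doubled $y$-variable produces an equality $B^Tp+D^Tq=0$ (not merely an inequality); I expect this to be the step most prone to a transcription slip. Alternatively, and perhaps more cleanly, one can quote the standard Farkas lemma for systems with mixed equalities/inequalities and free/sign-restricted variables directly and simply match notation, which reduces the proof to a one-line citation; I would likely present the short version and relegate the sign-check to the reader, as the excerpt attributes the exact form to Hlad\'{i}k~\cite{Hla2013b}.
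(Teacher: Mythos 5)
Your proof is correct. Note, though, that the paper itself does not prove this lemma at all: it is quoted verbatim as a known form of the Farkas lemma from Hlad\'{i}k \cite{Hla2013b}, so there is no in-paper argument to compare against, and your closing remark (that one could reduce the proof to a citation) is in fact exactly what the paper does. Your self-contained derivation is the standard one and the bookkeeping checks out: writing $y=y^+-y^-$ and replacing $Ax+By=b$ by the pair $Ax+By\leq b$, $-Ax-By\leq -b$ gives a system $Mz\leq h$, $z\geq0$ with
$M=\bigl(\begin{smallmatrix}A&B&-B\\-A&-B&B\\C&D&-D\end{smallmatrix}\bigr)$,
and the classical alternative $\{M^Tw\geq0,\ h^Tw<0,\ w\geq0\}$ with $w=(p^+,p^-,q)$ yields precisely $A^Tp+C^Tq\geq0$, the two opposite inequalities forcing $B^Tp+D^Tq=0$, and $b^Tp+d^Tq<0$ (rescalable to $\leq-1$ by positive homogeneity of the dual cone), with $p=p^+-p^-$ free; the converse direction via $p^+=\max(p,0)$, $p^-=\max(-p,0)$ is equally routine, and mutual exclusivity transfers because both translations are exact equivalences. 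The only stylistic caveat is that your first paragraph leans on an "inequality-form" Farkas lemma that is itself usually derived from the equality form, so if you want a genuinely self-contained statement you should either cite that variant precisely or include its one-line derivation (as you do implicitly when noting $h^Tw\geq z^TM^Tw\geq0$ rules out simultaneous solvability).
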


As long as $\Rad{\mace{B}^{\exists}}=0$ and $\Rad{\mace{D}^{\exists}}=0$, we have a sufficient and necessary characterization of AE solvability for the general model \nref{ilsGen}.

\begin{proposition}\label{propAEsolty}
Suppose that $\Rad{\mace{B}^{\exists}}=0$ and $\Rad{\mace{D}^{\exists}}=0$.
Then \nref{ilsGen} is AE solvable if and only if for each $s\in\{\pm1\}^m$ the system
\begin{subequations}\label{desAEsolty}
\begin{align}
(\umace{A}^{\exists}+\Mid{A}^{\forall}+\diag(s)\Rad{A}^{\forall})x
+(\Mid{B}^{\forall}+\diag(s)\Rad{B}^{\forall})y^1&\\
-(\Mid{B}^{\forall}-\diag(s)\Rad{B}^{\forall})y^2
&\leq\ovr{a}^{\exists}+\Mid{a}^{\forall}-\diag(s)\Rad{a}^{\forall},\\
-(\omace{A}^{\exists}+\Mid{A}^{\forall}+\diag(s)\Rad{A}^{\forall})^Tx
-(\Mid{B}^{\forall}+\diag(s)\Rad{B}^{\forall})y^1&\\
+(\Mid{B}^{\forall}-\diag(s)\Rad{B}^{\forall})y^2
&\leq-\uvr{a}^{\exists}-\Mid{a}^{\forall}+\diag(s)\Rad{a}^{\forall},\\
(\omace{C}^{\forall}+\umace{C}^{\exists})x
+\omace{D}^{\forall}y^1-\umace{D}^{\forall}y^2
&\leq \uvr{b}^{\forall}+\ovr{b}^{\exists},\\
x,y^1,y^2&\geq0
\end{align}\end{subequations}
is solvable.
\end{proposition}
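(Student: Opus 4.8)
The plan is to reduce AE solvability under the hypotheses $\Rad{\mace{B}^{\exists}}=0$ and $\Rad{\mace{D}^{\exists}}=0$ to a family of ordinary linear feasibility questions, one for each extremal realization of the $\forall$-parameters, and then to pin the $\exists$-parameters to their optimal extremes. First I would observe that, because $x\geq0$, and because for fixed $x,y$ the left-hand side of the equality constraint $(A^{\forall}+A^{\exists})x+(B^{\forall}+B^{\exists})y$ is an affine image of the interval parameters, the hardest realizations of the $\forall$-parameters to satisfy are the vertices. Concretely, for the equality system the relevant worst case over $A^{\forall}\in\imace{A}^{\forall}$ and $a^{\forall}\in\ivr{a}^{\forall}$ is governed by a sign vector $s\in\{\pm1\}^m$ choosing $A^{\forall}=\Mid{A}^{\forall}+\diag(s)\Rad{A}^{\forall}$ and $a^{\forall}=\Mid{a}^{\forall}-\diag(s)\Rad{a}^{\forall}$ (and correspondingly for $B^{\forall}$, where the sign of the contribution depends on $s$ and on the sign pattern of $y$, which is absorbed by writing $y=y^1-y^2$ with $y^1,y^2\geq0$). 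For the inequality system, the worst $\forall$-realization is simply $C^{\forall}=\omace{C}^{\forall}$, $D^{\forall}=\omace{D}^{\forall}$, $b^{\forall}=\uvr{b}^{\forall}$, using $x\geq0$ and the splitting of $y$. Thus AE solvability is equivalent to: for every $s\in\{\pm1\}^m$, there exist $\exists$-parameters and $x,y^1,y^2\geq0$ solving the resulting crisp system.

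The second step is to eliminate the $\exists$-parameters. Since $\Rad{\mace{B}^{\exists}}=0$ and $\Rad{\mace{D}^{\exists}}=0$, the matrices $B^{\exists}$ and $D^{\exists}$ are fixed and merge with the $\forall$-midpoints; only $A^{\exists}$, $C^{\exists}$, $a^{\exists}$, $b^{\exists}$ remain to be chosen. For each fixed $s$ and fixed $x,y^1,y^2\geq0$, the two-sided equality constraint reads $L(s) \le \ovr{a}^{\exists} + (\text{fixed})$ and $-L(s)\le -\uvr{a}^{\exists}+(\text{fixed})$ after moving $A^{\exists}x$ to the right; since $x\geq0$, the least restrictive choice of $A^{\exists}$ is $\umace{A}^{\exists}$ in the upper bound and $\omace{A}^{\exists}$ in the lower bound, and the least restrictive $a^{\exists}$ is $\ovr{a}^{\exists}$ in the upper and $\uvr{a}^{\exists}$ in the lower. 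Likewise for the inequality the least restrictive choice is $C^{\exists}=\umace{C}^{\exists}$ and $b^{\exists}=\ovr{b}^{\exists}$. This is precisely the $\exists$-side of the optimization that already appears in Theorem~\ref{thmStrSolv} and in the proof of the earlier AE-solvability proposition for inequalities; I would invoke that same idea. Substituting these extreme values yields exactly system~\nref{desAEsolty}, so for each $s$ the existence of suitable $\exists$-parameters together with $x,y^1,y^2$ is equivalent to solvability of~\nref{desAEsolty}.

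For the two directions: if~\nref{desAEsolty} is solvable for every $s\in\{\pm1\}^m$, then given arbitrary $A^{\forall},B^{\forall},C^{\forall},D^{\forall},a^{\forall},b^{\forall}$ in their interval hulls, I would pick $s$ to be (a valid sign choice for) the vertex dominating the given realization, take the $x,y^1,y^2$ guaranteed by~\nref{desAEsolty}, set $y=y^1-y^2$, and set the $\exists$-parameters to the extreme values above; a routine monotonicity check using $x\geq0$, $y^1,y^2\geq0$ and $|y|\le y^1+y^2$ shows that $(x,y)$ solves the crisp system~\nref{lsGen} for that realization, so~\nref{ilsGen} is AE solvable. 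Conversely, if~\nref{ilsGen} is AE solvable, then in particular it is solvable for each vertex realization indexed by $s$; fixing that realization and using the optimality of the extreme $\exists$-choices (any feasible $\exists$-parameters can be relaxed to the extremes without losing solvability, since $x\geq0$), together with the substitution $y=y^1-y^2$, $y^1,y^2\ge 0$, produces a solution of~\nref{desAEsolty} for that $s$.

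The main obstacle I anticipate is the careful bookkeeping in the equality rows: the contribution of $\imace{B}^{\forall}y$ to the two-sided worst case is not governed purely by $s$ but interacts with $\sgn(y)$, and one must verify that the substitution $y=y^1-y^2$ with independent nonnegative $y^1,y^2$ correctly linearizes all of $\Rad{B}^{\forall}|y|$-type terms while still being equivalent (not merely sufficient) — this is the step where the $(ii)\Leftrightarrow(iii)$-style argument from the previous proposition is essential, and it must be applied simultaneously on both the upper and lower equality bounds and on the inequality bound. The rest — checking that freezing the $\exists$-parameters at the stated extremes is without loss of generality, and matching constants with~\nref{desAEsolty} — is routine monotonicity in sign-definite variables.
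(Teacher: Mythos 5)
There is a genuine gap, and it sits exactly where you wave your hands: the claim that AE solvability can be reduced to the finitely many ``vertex'' systems indexed by $s\in\{\pm1\}^m$ by a ``routine monotonicity check''. Monotonicity arguments of the kind you invoke (worst-casing using $x\geq0$ and $y=y^1-y^2$) are sound for the one-sided inequality block, but they break down for the equality block: an equation $(A^{\forall}+A^{\exists})x+(B^{\forall}+B^{\exists})y=a^{\forall}+a^{\exists}$ must hold \emph{exactly} for every realization of the $\forall$-parameters, and an interior realization of $(A^{\forall},B^{\forall},a^{\forall})$ is not ``dominated'' by any single vertex. In particular, in the direction ``\nref{desAEsolty} solvable for all $s$ $\Rightarrow$ AE solvable'', the solution $(x,y^1,y^2)$ you extract from the $s$-system is tailored to the vertex data $\Mid{A}^{\forall}+\diag(s)\Rad{A}^{\forall}$, $\Mid{a}^{\forall}-\diag(s)\Rad{a}^{\forall}$, and there is no monotone way to transfer it (even with full freedom in the $\exists$-parameters, which only enter through $\umace{A}^{\exists}x\leq A^{\exists}x\leq\omace{A}^{\exists}x$ and $a^{\exists}\in\ivr{a}^{\exists}$) to an arbitrary intermediate realization; the required $(x,y)$ genuinely varies with the realization. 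A useful sanity check: your argument, if it worked, would give a one-line proof of Rohn's theorem on strong solvability of $\imace{A}x=\ivr{b}$ (the special case with everything universally quantified and no inequalities), which is known to be a nontrivial result proved by duality, not by vertex domination. Note also that \nref{desAEsolty} is not literally ``the system at a vertex realization'': the matrices multiplying $y^1$ and $y^2$ are two \emph{different} realizations of $\imace{B}^{\forall}$, and for rows with $s_i=+1$ the substitution $y^1+y^2\geq|y|$ tightens rather than relaxes the first inequality, so even your ``only if'' direction does not follow by simply instantiating a worst-case $B^{\forall}$ (which would anyway have to depend on the unknown sign of $y$, making the choice circular).

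The paper closes this gap with a duality argument that your sketch is missing. It negates AE solvability: non-solvability means there \emph{exists} a $\forall$-realization for which the remaining interval system (in the $\exists$-parameters, with $\Rad{\mace{B}^{\exists}}=\Rad{\mace{D}^{\exists}}=0$) has no weak solution; by Theorem~\ref{thmHladik} this is non-solvability of a single real system, which the Farkas Lemma~\ref{lmmFarkas} converts into solvability of a dual system in $(u,v,w)\geq0$. The existence of a bad $\forall$-realization is then precisely \emph{weak} solvability of a dual \emph{interval} system, to which Theorem~\ref{thmHladik} applies again and produces the sign vector $s$ (it comes from the orthant of the dual variable $u-v$, not from enumerating primal vertex realizations), and a second application of Farkas turns the resulting dual condition back into non-solvability of \nref{desAEsolty}. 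If you want to salvage your outline, you must replace the ``pick the dominating vertex'' step by this infeasibility-certificate argument (or an equivalent one); the bookkeeping with $y^1,y^2$ and the pinning of $A^{\exists},C^{\exists},a^{\exists},b^{\exists}$ to extremes is indeed the routine part.
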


\begin{proof}
The interval system \nref{ilsGen} is not AE solvable if and only if there are $A^{\forall}\in\imace{A}^{\forall}$, $B^{\forall}\in\imace{B}^{\forall}$, $C^{\forall}\in\imace{C}^{\forall}$, $ D^{\forall}\in\imace{D}^{\forall}$, $a^{\forall}\in\ivr{a}^{\forall}$ and $b^{\forall}\in\ivr{b}^{\forall}$ such that the interval system
\begin{align*}
(A^{\forall}+\imace{A}^{\exists})x+B^{\forall}y
 = a^{\forall}+\ivr{a}^{\exists},\ \ 
(C^{\forall}+\imace{C}^{\exists})x+D^{\forall}y
 \leq b^{\forall}+\ivr{b}^{\exists},\ \ 
x\geq0
\end{align*}
is not weakly solvable.
By Theorem~\ref{thmHladik}, equivalently, the real system
\begin{align*}
(A^{\forall}+\umace{A}^{\exists})x+B^{\forall}y
 &\leq a^{\forall}+\ovr{a}^{\exists},\\ 
-(A^{\forall}+\omace{A}^{\exists})x-B^{\forall}y
 &\leq -a^{\forall}-\uvr{a}^{\exists},\\ 
(C^{\forall}+\umace{C}^{\exists})x+D^{\forall}y
 &\leq b^{\forall}+\ovr{b}^{\exists},\\ 
x\geq0
\end{align*}
is not solvable. By the Farkas Lemma~\ref{lmmFarkas}, this is true if and only if the system
\begin{align*}
(A^{\forall}+\umace{A}^{\exists})^Tu
-(A^{\forall}+\omace{A}^{\exists})^Tv
+(C^{\forall}+\umace{C}^{\exists})^Tw&\geq0,\\
(B^{\forall})^Tu-(B^{\forall})^Tv+(D^{\forall})^Tw&=0,\\
(a^{\forall}+\ovr{a}^{\exists})^Tu
-(a^{\forall}+\uvr{a}^{\exists})^Tv
+(b^{\forall}+\ovr{b}^{\exists})^Tw&\leq-1,\\
u,v,w&\geq0
\end{align*}
is solvable. We rewrite the system as
\begin{align*}
(\umace{A}^{\exists})^Tu-(\omace{A}^{\exists})^Tv
+(A^{\forall})^T(u-v)
+(C^{\forall}+\umace{C}^{\exists})^Tw&\geq0,\\
(B^{\forall})^T(u-v)+(D^{\forall})^Tw&=0,\\
(\ovr{a}^{\exists})^Tu-(\uvr{a}^{\exists})^Tv
+(a^{\forall})^T(u-v)
+(b^{\forall}+\ovr{b}^{\exists})^Tw&\leq-1,\\
u,v,w&\geq0.
\end{align*}
Since this system is solvable for some realization of $\forall$-parameters, we have equivalently that the interval system
\begin{align*}
(\umace{A}^{\exists})^Tu-(\omace{A}^{\exists})^Tv
+(\imace{A}^{\forall})^T(u-v)
+(\imace{C}^{\forall}+\umace{C}^{\exists})^Tw&\geq0,\\
(\imace{B}^{\forall})^T(u-v)+(\imace{D}^{\forall})^Tw&=0,\\
(\ovr{a}^{\exists})^Tu-(\uvr{a}^{\exists})^Tv
+(\ivr{a}^{\forall})^T(u-v)
+(\ivr{b}^{\forall}+\ovr{b}^{\exists})^Tw&\leq-1,\\
u,v,w&\geq0
\end{align*}
is weakly solvable. By Theorem~\ref{thmHladik}, there is equivalently $s\in\{\pm1\}^m$ such that
\begin{align*}
(\umace{A}^{\exists})^Tu-(\omace{A}^{\exists})^Tv
+(\Mid{A}^{\forall}+\diag(s)\Rad{A}^{\forall})^T(u-v)
+(\omace{C}^{\forall}+\umace{C}^{\exists})^Tw&\geq0,\\
(\Mid{B}^{\forall}+\Rad{B}^{\forall}\diag(s))^T(u-v)
+\omace{D}^{\forall}w&\geq0,\\
-(\Mid{B}^{\forall}-\diag(s)\Rad{B}^{\forall})^T(u-v)
-(\umace{D}^{\forall})^Tw&\geq0,\\
(\ovr{a}^{\exists})^Tu-(\uvr{a}^{\exists})^Tv
+(\Mid{a}^{\forall}-\diag(s)\Rad{a}^{\forall})^T(u-v)
+(\uvr{b}^{\forall}+\ovr{b}^{\exists})^Tw&\leq-1,\\
u,v,w&\geq0.
\end{align*}
is solvable. By the Farkas lemma again, this system is solvable if and only if the system
\begin{align*}
(\umace{A}^{\exists}+\Mid{A}^{\forall}+\diag(s)\Rad{A}^{\forall})x
+(\Mid{B}^{\forall}+\diag(s)\Rad{B}^{\forall})y^1&\\
-(\Mid{B}^{\forall}-\diag(s)\Rad{B}^{\forall})y^2
&\leq\ovr{a}^{\exists}+\Mid{a}^{\forall}-\diag(s)\Rad{a}^{\forall},\\
-(\omace{A}^{\exists}+\Mid{A}^{\forall}+\diag(s)\Rad{A}^{\forall})^Tx
-(\Mid{B}^{\forall}+\diag(s)\Rad{B}^{\forall})y^1&\\
+(\Mid{B}^{\forall}-\diag(s)\Rad{B}^{\forall})y^2
&\leq-\uvr{a}^{\exists}-\Mid{a}^{\forall}+\diag(s)\Rad{a}^{\forall},\\
(\omace{C}^{\forall}+\umace{C}^{\exists})x
+\omace{D}^{\forall}y^1-\umace{D}^{\forall}y^2
&\leq \uvr{b}^{\forall}+\ovr{b}^{\exists}\\
x,y^1,y^2&\geq0
\end{align*}
is not solvable, which completes the proof.
\end{proof}

For the general case, the idea from the proof fails. However, a small adaptation of the proof gives a necessary condition and a sufficient condition for AE solvability. Notice that they differ only in the order of quantifiers.

\begin{proposition}\label{propAeSoltySuff}
The interval system \nref{ilsGen} is AE solvable if there is $z\in\{\pm1\}^{n'}$ such that for each $s\in\{\pm1\}^m$ the system
\begin{subequations}\label{desAEsoltySuf}
\begin{align}
(\umace{A}^{\exists}+\Mid{A}^{\forall}+\diag(s)\Rad{A}^{\forall})x&\\
+(\Mid{B}^{\exists}-\Rad{B}^{\exists}\diag(z)
  +\Mid{B}^{\forall}+\diag(s)\Rad{B}^{\forall})y^1&\\
-(\Mid{B}^{\exists}-\Rad{B}^{\exists}\diag(z)
  +\Mid{B}^{\forall}-\diag(s)\Rad{B}^{\forall})y^2
&\leq\ovr{a}^{\exists}+\Mid{a}^{\forall}-\diag(s)\Rad{a}^{\forall},\\
-(\omace{A}^{\exists}+\Mid{A}^{\forall}+\diag(s)\Rad{A}^{\forall})^Tx&\\
-(\Mid{B}^{\exists}+\Rad{B}^{\exists}\diag(z)
  +\Mid{B}^{\forall}+\diag(s)\Rad{B}^{\forall})y^1&\\
+(\Mid{B}^{\exists}+\Rad{B}^{\exists}\diag(z)
  +\Mid{B}^{\forall}-\diag(s)\Rad{B}^{\forall})y^2
&\leq-\uvr{a}^{\exists}-\Mid{a}^{\forall}+\diag(s)\Rad{a}^{\forall},\\
(\omace{C}^{\forall}+\umace{C}^{\exists})x
+(\omace{D}^{\forall}+\Mid{D}^{\exists}-\Rad{D}^{\exists}\diag(z))y^1&\\
-(\umace{D}^{\forall}+\Mid{D}^{\exists}-\Rad{D}^{\exists}\diag(z))y^2
&\leq \uvr{b}^{\forall}+\ovr{b}^{\exists}\\
x,y^1,y^2&\geq0
\end{align}\end{subequations}
is solvable.
\end{proposition}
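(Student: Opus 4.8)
The plan is to mimic the proof of Proposition~\ref{propAEsolty}, but to handle the existentially quantified coefficients in $\imace{B}$ and $\imace{D}$ by fixing their realizations \emph{in advance} rather than letting them depend on the $\forall$-parameters; this is exactly the asymmetry that converts an ``if and only if'' into a mere ``if.'' Concretely, suppose the displayed system \nref{desAEsoltySuf} is solvable for every $s\in\{\pm1\}^m$, for a fixed sign vector $z\in\{\pm1\}^{n'}$. I would first reverse-engineer which fixed realizations $B^{\exists}\in\imace{B}^{\exists}$ and $D^{\exists}\in\imace{D}^{\exists}$ the vector $z$ encodes: reading off the coefficients of $y^1$ and $y^2$ in \nref{desAEsoltySuf}, the natural choice is $B^{\exists}:=\Mid{B^{\exists}}-\Rad{B^{\exists}}\diag(z)$ and $D^{\exists}:=\Mid{D^{\exists}}-\Rad{D^{\exists}}\diag(z)$, i.e.\ the vertex of $\imace{B}^{\exists}$ (resp.\ $\imace{D}^{\exists}$) selected componentwise by $z$. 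Once these are frozen, the interval system \nref{ilsGen} with $\imace{B}^{\exists},\imace{D}^{\exists}$ replaced by the point matrices $B^{\exists},D^{\exists}$ is of the type covered by Proposition~\ref{propAEsolty} (its existentially-quantified $B,D$ parts now have zero radius), so that proposition gives a complete characterization of its AE solvability.

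The second step is to run the argument of Proposition~\ref{propAEsolty} with these frozen matrices and check that its output system coincides with \nref{desAEsoltySuf}. I would substitute $\Mid{B^{\exists}}-\Rad{B^{\exists}}\diag(z)$ for the role played by the (now absent) $B^{\exists}$-terms, and likewise for $D^{\exists}$; tracking these through the two applications of the Farkas Lemma~\ref{lmmFarkas} and the two applications of Theorem~\ref{thmHladik} should reproduce \nref{desAEsoltySuf} line by line — the $B^{\exists}$-contribution lands in the $y^1,y^2$ coefficients of the first two blocks, and the $D^{\exists}$-contribution in the third block, precisely as written. Hence solvability of \nref{desAEsoltySuf} for all $s$ is equivalent (by Proposition~\ref{propAEsolty} applied to the frozen system) to AE solvability of the frozen interval system. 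Finally, I would invoke the trivial monotonicity observation: if the interval system is AE solvable when $\imace{B}^{\exists}$ and $\imace{D}^{\exists}$ are shrunk to the single points $B^{\exists},D^{\exists}$, then a fortiori it is AE solvable when those coefficients are allowed to range over the full intervals $\imace{B}^{\exists},\imace{D}^{\exists}$ — any valid choice for the smaller family is still a valid choice for the larger one, and the $\exists$-quantifier only gains freedom. This chains the implications to yield AE solvability of \nref{ilsGen}, which is the claim.

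The main technical obstacle is bookkeeping rather than conceptual: I must verify that freezing $B^{\exists}$ at the $z$-vertex and then applying the machinery of Proposition~\ref{propAEsolty} genuinely yields \nref{desAEsoltySuf}, and in particular that the appearance of $\diag(z)$ (as opposed to some transposed or sign-flipped variant) is correct in both the $+\Rad{B^{\exists}}\diag(z)$ and $-\Rad{B^{\exists}}\diag(z)$ occurrences in the two blocks; a wrong sign here would break the argument. The subtlety is that in Proposition~\ref{propAEsolty} the $B^{\forall}$-term passes through a transpose and then back, so one should double-check that freezing an $\exists$-matrix behaves the same way — it does, since the Farkas dual treats the point matrix $B^{\exists}$ exactly as it treated the point-valued realization of $B^{\forall}$, and the choice $B^{\exists}=\umace{A}^{\exists}$-style vertex selection in Proposition~\ref{propAEsolty} is replaced here by the $z$-indexed vertex. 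One should also note why the converse fails and hence why only ``if'' is claimed: in the full problem the optimal $B^{\exists},D^{\exists}$ may legitimately depend on the realization of the $\forall$-parameters, whereas \nref{desAEsoltySuf} commits to a single $z$ uniformly; Example~\ref{exFail} already illustrates this phenomenon for the simpler inequality case, so no separate discussion is needed beyond remarking that the quantifier order in \nref{desAEsoltySuf} ($\exists z\,\forall s$) is what makes it sufficient but not necessary.
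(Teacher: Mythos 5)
There is a genuine gap, and it sits precisely at the point you flagged as needing verification. Freezing $B^{\exists}$ and $D^{\exists}$ at the $z$-vertices $\Mid{B}^{\exists}-\Rad{B}^{\exists}\diag(z)$ and $\Mid{D}^{\exists}-\Rad{D}^{\exists}\diag(z)$ and then applying Proposition~\ref{propAEsolty} to the frozen system (absorbing the point matrices into the $\forall$-part, which is legitimate) does \emph{not} reproduce \nref{desAEsoltySuf}. It reproduces the first and third blocks, but in the second block — the one coming from the direction $\geq$ when the equations are split — the frozen route necessarily produces the \emph{same} vertex $\Mid{B}^{\exists}-\Rad{B}^{\exists}\diag(z)$ again, whereas \nref{desAEsoltySuf} contains the \emph{opposite} vertex $\Mid{B}^{\exists}+\Rad{B}^{\exists}\diag(z)$. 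The reason is that in the proposition $z$ is not the index of a pre-selected realization of $\imace{B}^{\exists}$: it is the sign branch arising from Theorem~\ref{thmHladik} applied to (the negation of) weak solvability of the system in which only the $\exists$-intervals remain, and that characterization intrinsically uses the two opposite vertices of $\imace{B}^{\exists}$ in the two inequality directions obtained from the equations — the implicit $\exists$-realization there is not constant, it depends on the data. Consequently the system your argument certifies differs from \nref{desAEsoltySuf} in the second block by the terms $2\Rad{B}^{\exists}\diag(z)y^1$ and $-2\Rad{B}^{\exists}\diag(z)y^2$; since $y^1,y^2\geq0$ while $\diag(z)$ has mixed signs, neither system dominates the other, so solvability of \nref{desAEsoltySuf} for all $s$ does not give solvability of your frozen system for all $s$, and the first link of your chain breaks.

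What you do prove soundly (the monotonicity step and the application of Proposition~\ref{propAEsolty} to a system with point-valued $\exists$-parts are both fine) is a \emph{different} sufficient condition, namely the one with $\Mid{B}^{\exists}-\Rad{B}^{\exists}\diag(z)$ appearing in both equation-derived blocks; that is a valid statement, but it is not Proposition~\ref{propAeSoltySuff}. The paper instead argues by contraposition: if \nref{ilsGen} is not AE solvable, fix a witnessing $\forall$-realization, negate weak solvability via Theorem~\ref{thmHladik} (this is where $z$ and the two opposite $B^{\exists}$-vertices enter), dualize by the Farkas Lemma~\ref{lmmFarkas}, pass from solvability at that realization to weak solvability of the resulting interval dual system (the one-way step responsible for mere sufficiency), apply Theorem~\ref{thmHladik} again to obtain $s$, and dualize once more to conclude that \nref{desAEsoltySuf} is unsolvable for that $s$. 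To salvage your route you would have to show that a solution of \nref{desAEsoltySuf} also solves (or at least implies solvability of) the frozen system, and nothing in your argument provides this.
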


\begin{proof}
If the interval system \nref{ilsGen} is not AE solvable, then there are $A^{\forall}\in\imace{A}^{\forall}$, $B^{\forall}\in\imace{B}^{\forall}$, $C^{\forall}\in\imace{C}^{\forall}$, $ D^{\forall}\in\imace{D}^{\forall}$, $a^{\forall}\in\ivr{a}^{\forall}$ and $b^{\forall}\in\ivr{b}^{\forall}$ such that the interval system
\begin{align*}
(A^{\forall}+\imace{A}^{\exists})x+(B^{\forall}+\imace{B}^{\exists})y
 = a^{\forall}+\ivr{a}^{\exists},\ \ 
(C^{\forall}+\imace{C}^{\exists})x+(D^{\forall}+\imace{D}^{\exists})y
 \leq b^{\forall}+\ivr{b}^{\exists},\ \ 
x\geq0
\end{align*}
is not weakly solvable.
By Theorem~\ref{thmHladik}, for each $z\in\{\pm1\}^{n'}$ the real system
\begin{align*}
(A^{\forall}+\umace{A}^{\exists})x
+(B^{\forall}+\Mid{B}^{\exists}-\Rad{B}^{\exists}\diag(z))y
 &\leq a^{\forall}+\ovr{a}^{\exists},\\ 
-(A^{\forall}+\omace{A}^{\exists})x
-(B^{\forall}+\Mid{B}^{\exists}+\Rad{B}^{\exists}\diag(z))y
 &\leq -a^{\forall}-\uvr{a}^{\exists},\\ 
(C^{\forall}+\umace{C}^{\exists})x
+(D^{\forall}+\Mid{D}^{\exists}-\Rad{D}^{\exists}\diag(z))y
 &\leq b^{\forall}+\ovr{b}^{\exists},\\ 
x\geq0
\end{align*}
is not solvable. By the Farkas Lemma~\ref{lmmFarkas}, this is true if and only if the system
\begin{align*}
(A^{\forall}+\umace{A}^{\exists})^Tu
-(A^{\forall}+\omace{A}^{\exists})^Tv
+(C^{\forall}+\umace{C}^{\exists})^Tw&\geq0,\\
(B^{\forall}+\Mid{B}^{\exists}-\Rad{B}^{\exists}\diag(z))^Tu
-(B^{\forall}+\Mid{B}^{\exists}+\Rad{B}^{\exists}\diag(z))^Tv&\\
+(D^{\forall}+\Mid{D}^{\exists}-\Rad{D}^{\exists}\diag(z))^Tw&=0,\\
(a^{\forall}+\ovr{a}^{\exists})^Tu
-(a^{\forall}+\uvr{a}^{\exists})^Tv
+(b^{\forall}+\ovr{b}^{\exists})^Tw&\leq-1,\\
u,v,w&\geq0
\end{align*}
is solvable. We rewrite the system as
\begin{align*}
(\umace{A}^{\exists})^Tu-(\omace{A}^{\exists})^Tv
+(A^{\forall})^T(u-v)
+(C^{\forall}+\umace{C}^{\exists})^Tw&\geq0,\\
(\Mid{B}^{\exists}-\Rad{B}^{\exists}\diag(z))^Tu
-(\Mid{B}^{\exists}+\Rad{B}^{\exists}\diag(z))^Tv&\\
+(B^{\forall})^T(u-v)
+(D^{\forall}+\Mid{D}^{\exists}-\Rad{D}^{\exists}\diag(z))^Tw&=0,\\
(\ovr{a}^{\exists})^Tu-(\uvr{a}^{\exists})^Tv
+(a^{\forall})^T(u-v)
+(b^{\forall}+\ovr{b}^{\exists})^Tw&\leq-1,\\
u,v,w&\geq0.
\end{align*}
Now, there is the point where the equivalence cannot be easily establish. We can conclude that for each $z\in\{\pm1\}^{n'}$ the interval system
\begin{align*}
(\umace{A}^{\exists})^Tu-(\omace{A}^{\exists})^Tv
+(\imace{A}^{\forall})^T(u-v)
+(\imace{C}^{\forall}+\umace{C}^{\exists})^Tw&\geq0,\\
(\Mid{B}^{\exists}-\Rad{B}^{\exists}\diag(z))^Tu
-(\Mid{B}^{\exists}+\Rad{B}^{\exists}\diag(z))^Tv&\\
+(\imace{B}^{\forall})^T(u-v)
+(\imace{D}^{\forall}+\Mid{D}^{\exists}-\Rad{D}^{\exists}\diag(z))^Tw&=0,\\
(\ovr{a}^{\exists})^Tu-(\uvr{a}^{\exists})^Tv
+(\ivr{a}^{\forall})^T(u-v)
+(\ivr{b}^{\forall}+\ovr{b}^{\exists})^Tw&\leq-1,\\
u,v,w&\geq0.
\end{align*}
is weakly solvable. By Theorem~\ref{thmHladik}, for each such system there is $s\in\{\pm1\}^m$ such that
\begin{align*}
(\umace{A}^{\exists})^Tu-(\omace{A}^{\exists})^Tv
+(\Mid{A}^{\forall}+\diag(s)\Rad{A}^{\forall})^T(u-v)
+(\omace{C}^{\forall}+\umace{C}^{\exists})^Tw&\geq0,\\
(\Mid{B}^{\exists}-\Rad{B}^{\exists}\diag(z))^Tu
-(\Mid{B}^{\exists}+\Rad{B}^{\exists}\diag(z))^Tv&\\
+(\Mid{B}^{\forall}+\diag(s)\Rad{B}^{\forall})^T(u-v)
+(\omace{D}^{\forall}+\Mid{D}^{\exists}-\Rad{D}^{\exists}\diag(z))^Tw&\geq0,\\
-(\Mid{B}^{\exists}-\Rad{B}^{\exists}\diag(z))^Tu
+(\Mid{B}^{\exists}+\Rad{B}^{\exists}\diag(z))^Tv&\\
-(\Mid{B}^{\forall}-\diag(s)\Rad{B}^{\forall})^T(u-v)
-(\umace{D}^{\forall}+\Mid{D}^{\exists}-\Rad{D}^{\exists}\diag(z))^Tw&\geq0,\\
(\ovr{a}^{\exists})^Tu-(\uvr{a}^{\exists})^Tv
+(\Mid{a}^{\forall}-\diag(s)\Rad{a}^{\forall})^T(u-v)
+(\uvr{b}^{\forall}+\ovr{b}^{\exists})^Tw&\leq-1,\\
u,v,w&\geq0.
\end{align*}
is solvable. By the Farkas lemma again, the system
\begin{align*}
(\umace{A}^{\exists}+\Mid{A}^{\forall}+\diag(s)\Rad{A}^{\forall})x
+(\Mid{B}^{\exists}-\Rad{B}^{\exists}\diag(z)
  +\Mid{B}^{\forall}+\diag(s)\Rad{B}^{\forall})y^1&\\
-(\Mid{B}^{\exists}-\Rad{B}^{\exists}\diag(z)
  +\Mid{B}^{\forall}-\diag(s)\Rad{B}^{\forall})y^2
&\leq\ovr{a}^{\exists}+\Mid{a}^{\forall}-\diag(s)\Rad{a}^{\forall},\\
-(\omace{A}^{\exists}+\Mid{A}^{\forall}+\diag(s)\Rad{A}^{\forall})^Tx
-(\Mid{B}^{\exists}+\Rad{B}^{\exists}\diag(z)
  +\Mid{B}^{\forall}+\diag(s)\Rad{B}^{\forall})y^1&\\
+(\Mid{B}^{\exists}+\Rad{B}^{\exists}\diag(z)
  +\Mid{B}^{\forall}-\diag(s)\Rad{B}^{\forall})y^2
&\leq-\uvr{a}^{\exists}-\Mid{a}^{\forall}+\diag(s)\Rad{a}^{\forall},\\
(\omace{C}^{\forall}+\umace{C}^{\exists})x
+(\omace{D}^{\forall}+\Mid{D}^{\exists}-\Rad{D}^{\exists}\diag(z))y^1
-(\umace{D}^{\forall}+\Mid{D}^{\exists}-\Rad{D}^{\exists}\diag(z))y^2
&\leq \uvr{b}^{\forall}+\ovr{b}^{\exists}\\
x,y^1,y^2&\geq0
\end{align*}
is not solvable, which completes the proof.
\end{proof}

Example~\ref{exFail} from Section~\ref{ssAeSoltySols} can also be used here to illustrate the situation that the condition from Proposition~\ref{propAeSoltySuff} is not necessary in general. Indeed, that interval system is AE solvable, but the system \nref{desAEsoltySuf}, which takes the form of
$$
-zy^1+zy^2\leq-2,\ 
y^1+y^2\leq1,\ 
y^1,y^2\geq0
$$
is solvable for no $z\in\{\pm1\}$.

\subsection{Special cases of AE solvability}

Proposition~\ref{propAEsolty} generalizes several classical results on solvability of interval systems. In particular, for $\imace{A}\in\IR^{m\times n}$ and $\ivr{b}\in\IR^m$ we have:
\begin{itemize}
\item
For strong solvability of interval equations we obtain the same result as the characterization by Rohn \cite{Roh2003,Roh2006}. The interval system $\imace{A}x=\ivr{b}$ is strongly solvable if and only if the system 
\begin{align*}
(\Mid{A}+\diag(s)\Rad{A})x^1
-(\Mid{A}-\diag(s)\Rad{A})x^1
=\Mid{b}-\diag(s)\Rad{b},\ \ x^1,x^2\geq0
\end{align*}
is solvable for each $s\in\{\pm1\}^m$.
\item
For strong solvability of interval equations with nonnegative variables we obtain the same result as the characterization by Rohn \cite{Roh1981,Roh2006}. The interval system $\imace{A}x=\ivr{b}$, $x\geq0$ is strongly solvable if and only if the system 
\begin{align*}
(\Mid{A}+\diag(s)\Rad{A})x=\Mid{b}-\diag(s)\Rad{b},\ \ x\geq0
\end{align*}
is solvable for each $s\in\{\pm1\}^m$.
\item
For strong solvability of interval inequalities we obtain the same result as the characterization by Rohn \& Kreslov\'{a} \cite{RohKre1994,Roh2006}. The interval system $\imace{A}x\leq\ivr{b}$ is strongly solvable if and only if the system 
\begin{align*}
\omace{A}x^1-\umace{A}x^2\leq\uvr{b},\ \ x^1,x^2\geq0
\end{align*}
is solvable.
\item
For strong solvability of interval inequalities with nonnegative variables we obtain the same result as the classical characterization by Machost \cite{Mach1970,Roh2006}. The interval system $\imace{A}x\leq\ivr{b}$, $x\geq0$ is strongly solvable if and only if the system 
\begin{align*}
\omace{A}x\leq\uvr{b},\ \ x\geq0
\end{align*}
is solvable.
\item
For weak solvability of interval equations with nonnegative variables we obtain the same result as the consequence of the classical characterization by Oettli \& Prager \cite{OetPra1964,Roh2006}. The interval system $\imace{A}x=\ivr{b}$, $x\geq0$ is weakly solvable if and only if the system 
\begin{align*}
\umace{A}x\leq\ovr{b},\ \ \omace{A}x\geq\uvr{b},\ \ x\geq0
\end{align*}
is solvable.
\item
For weak solvability of interval inequalities with nonnegative variables we obtain also the same result as the well known characterization; see, e.g., \cite{Roh2006}. The interval system $\imace{A}x\leq\ivr{b}$, $x\geq0$ is weakly solvable if and only if the system 
\begin{align*}
\umace{A}x\leq\ovr{b},\ \ x\geq0
\end{align*}
is solvable.
\end{itemize}

Unfortunately, weak solvability of interval systems with free variables is not involved in our generalization. Besides weak and strong solvability, we have also the following analogies of tolerable and controllable solvabilities by Li et al.~\cite{LiLuo2014s} as simple consequences of Proposition~\ref{propAEsolty}:
\begin{itemize}
\item
For each $A\in\imace{A}$ there is $b\in\ivr{b}$ such that $Ax=b$ is solvable if and only if
\begin{align*}
\uvr{b}\leq(\Mid{A}+\diag(s)\Rad{A})x^1
-(\Mid{A}-\diag(s)\Rad{A})x^2
\leq\ovr{b},\ \ x^1,x^2\geq0
\end{align*}
is solvable for each $s\in\{\pm1\}^m$.
\item
For each $A\in\imace{A}$ there is $b\in\ivr{b}$ such that $Ax=b$, $x\geq0$ is solvable if and only if
\begin{align*}
\uvr{b}\leq(\Mid{A}+\diag(s)\Rad{A})x
\leq\ovr{b},\ \ x\geq0
\end{align*}
is solvable for each $s\in\{\pm1\}^m$.
\item
For each $b\in\ivr{b}$ there is $A\in\imace{A}$ such that $Ax=b$, $x\geq0$ is solvable if and only if
\begin{align*}
\umace{A}x\leq \Mid{b}-\diag(s)\Rad{b}\leq\omace{A}x,\ \ x\geq0
\end{align*}
is solvable for each $s\in\{\pm1\}^m$.
\item
For each $A\in\imace{A}$ there is $b\in\ivr{b}$ such that $Ax\leq b$ is solvable if and only if
\begin{align*}
\omace{A}x^1-\umace{A}x^2\leq\ovr{b},\ \ x^1,x^2\geq0
\end{align*}
is solvable.
\item
For each $A\in\imace{A}$ there is $b\in\ivr{b}$ such that $Ax\leq b$, $x\geq0$ is solvable if and only if
\begin{align*}
\omace{A}x\leq\ovr{b},\ \ x\geq0
\end{align*}
is solvable.
\item
For each $b\in\ivr{b}$ there is $A\in\imace{A}$ such that $Ax\leq b$, $x\geq0$ is solvable if and only if
\begin{align*}
\umace{A}x\leq\uvr{b},\ \ x\geq0
\end{align*}
is solvable.
\end{itemize}

\section{Conclusion}

We characterized AE solutions and for a certain sub-class of problems we also characterized AE solvability. For general problems, we presented only a sufficient condition for AE solvability. A complete characterization of AE solvability remains an open problem.



\bibliographystyle{abbrv}
\bibliography{lin_sys_ae}

\end{document}